\newlength{\rulebreite}
\def\timesover#1#2#3{\ \xymatrix@1@=0pt@M=0pt{ _{#1}&\times&_{#2} \\& ^{#3}&}\ }
\def\otimesover#1#2#3{\ \xymatrix@1@=0pt@M=0pt{ _{#1}&\otimes&_{#2} \\& ^{#3}&}\ }
\theoremstyle{plain}
\newtheorem{thm}{Theorem}
\newtheorem{lem}[thm]{Lemma}
\newtheorem{cor}[thm]{Corollary}
\newtheorem{prop}[thm]{Proposition}
\newtheorem{prop-ex}[thm]{Example}
\theoremstyle{definition}
\newtheorem{defn}[thm]{Definition}
\newtheorem{rmk}[thm]{Remark}
\newtheorem{ex}[thm]{Example}
\newtheorem{eig}[thm]{Properties}
\newtheorem{notas}[thm]{Notations}
\newtheorem{construction}[thm]{Construction}
\numberwithin{thm}{section}
\numberwithin{equation}{section}
\newcommand{\ml}[2]{\begin{multline}\label{#1}#2 \end{multline}}
\newcommand{\ga}[2]{\begin{gather}\label{#1}#2 \end{gather}}
\newcommand{\Spec}{{\rm Spec \,}}
\newcommand{\ilim}{\mathop{\varprojlim}\limits} 
\newcommand{\surj}{\twoheadrightarrow}
\newcommand{\inj}{\hookrightarrow}
\newcommand{\tensor}{\otimes}
\newcommand{\sC}{{\mathcal C}}
\newcommand{\sD}{{\mathcal D}}
\newcommand{\sO}{{\mathcal O}}
\newcommand{\sT}{{\mathcal T}}
\newcommand{\A}{{\mathbb A}}
\newcommand{\C}{{\mathbb C}}
\newcommand{\N}{{\mathbb N}}
\newcommand{\Z}{{\mathbb Z}}
\newcommand{\et}{{\acute{e}t}}
\begin{document}
\title[Algebraic fundamental group]{On the algebraic fundamental group of smooth varieties in characteristic $p>0$}
\author{H\'el\`ene Esnault}
\address{
Universit\"at Duisburg-Essen, Mathematik, 45117 Essen, Germany}
\email{esnault@uni-due.de}
\author{Amit Hogadi}
\address{ School of Mathematics, Tata Institute of Fundamental Research, Homi Bhabha Road, Colaba, Mumbai 400005, India}
\email{amit@math.tifr.res.in}
\date{April 9, 2010}
\thanks{Partially supported by  the DFG Leibniz Preis, the SFB/TR45, the ERC Advanced Grant 226257}
\begin{abstract}We define an analog in characteristic $p>0$ of the proalgebraic completion of the topological fundamental group of a complex manifold. 
\end{abstract}
\maketitle
\section{Introduction} 
Let $X$ be a smooth algebraic variety defined over a field $k$ endowed with a rational point $x\in X(k)$.

If $k$ is the field of complex numbers $\C$,
the proalgebraic completion 
$\pi^{{\rm alg,rs}}(X,x)$ of the topological fundamental group $\pi^{{\rm top}}_1(X,x)$ is defined as  the prosystem $\varprojlim H$, where $H\subset GL(n, \C)$ runs over the Zariski closures of the monodromy groups $\rho(\pi^{{\rm top}}(X,x))$ of complex linear representations $\rho: \pi^{{\rm top}}_1(X,x) \to GL(n,\C)$. The profinite completion $\varprojlim H$, where $H$ runs over the finite quotients of $\pi^{{\rm top}}_1(X,x)$, is, via the Riemann existence theorem, identified with Grothendieck's \'etale fundamental group $\pi_1^{{\rm \et}}(X,x)$. Since any finite group is embeddable in $GL(n, \C)$ for some $n$, this defines, thinking of $\pi_1^{{\rm \et}}(X,x)$ as a complex (constant) proalgebraic group, a surjective homomorphism 
$\varphi_{\C}^{{\rm rs}}: \pi^{{\rm alg,rs}}(X,x)\to \pi_1^{{\rm \et}}(X,x)$, and in fact $\pi_1^{{\rm \et}}(X,x)$ is the profinite quotient of $\pi^{{\rm alg,rs}}(X,x)$. 
By the Riemann-Hilbert correspondence, $\pi^{{\rm alg,rs}}(X,x)$ is the Tannaka group-scheme of the category of $\sO_X$-coherent regular singular $\sD_X$-modules, which is a full subcategory of the category of $\sO_X$-coherent $\sD_X$-modules. We denote by $\pi^{{\rm alg}}(X,x)$ the corresponding Tannaka group-scheme, and by $\varphi_{\C}: \pi^{{\rm alg}}(X,x) \surj \pi^{{\rm alg,rs}}(X,x)\xrightarrow{\varphi_{\C}^{{\rm rs}}} \pi_1^{{\rm \et}}(X,x)$ the composite morphism. It is surjective as well, and since any flat connection with finite monodromy is regular singular,  $\pi_1^{{\rm \et}}(X,x)$ is the profinite quotient of $\pi^{{\rm alg}}(X,x)$. 

If $k$ is a characteristic $0$ field,  $\pi^{{\rm alg}}(X,x)$ is defined as the Tannaka group-scheme of the $k$-linear tensor category of $\sO_X$-coherent $\sD_X$-modules equipped with the fiber functor defined as the restriction of the module on $x$. The full subcategory of {\it finite objects}, that is   objects  with finite monodromy group-scheme, or said differently, objects which have the property that the full Tannaka subcategory which is spanned by it has a finite Tannaka group-scheme, defines a pro-finite $k$-group-scheme $\pi^{{\rm \et}}(X,x)$. Since $\pi^{{\rm \et}}(X,x)(\bar k)=\pi_1^{\rm \et}(X,x)$ (\cite[Remark~2.10]{EHS}), and both $\pi^{{\rm alg}}(X,x)$ and $\pi^{{\rm \et}}(X,x)$ satisfy base change for finite extensions $k\subset L$ (\cite[Property~2.54)]{EH}), we see that 
the surjection $\varphi: \pi^{{\rm alg}}(X,x)\to \pi^{{\rm \et}}(X,x)$ is a $k$-form of $\varphi_{\C}$ for any complex embedding $k\subset \C$. 
Moreover, by definition, $\varphi$ induces the pro-finite quotient homomorphism.  

If $k$ is a  characteristic $p>0$ field, the category of $\sO_X$-coherent $\sD_X$-modules is again a $k$-linear abelian tensor rigid category. It is part of Katz' theorem asserting that this category is equivalent to the 
category of stratified $\sO_X$-coherent sheaves (see \cite[Theorem~1.3]{G}, \cite[Theorem~8]{dS}, where it is shown over $k=\bar k$). If $k=\bar k$, 
its Tannaka group-scheme $\pi^{{\rm alg}}(X,x)$ is shown to be pro-smooth in \cite[Corollary~12]{dS} (strictly speaking, it is shown there  only for the profinite part, but dos Santos' proof applies more generally as mentioned in \cite[Corollary~7]{dSOW}). The homomorphism $\varphi$ is then defined by the full embedding of the subcategory of objects with finite monodromy group-scheme. So by definition, $\varphi$ induces the pro-finite quotient homomorphism.  

 On the other hand, if $X$ is a reduced connected scheme over a characteristic $p>0$ field $k$, endowed with a rational point $x\in X(k)$, Nori \cite[Chapter~II]{N}
constructed a fundamental group-scheme $\pi^N(X,x)$ as the projective system of finite $k$-group-schemes $G$ for which there is a  $G$-torsor $h:Y\to X$ under $G$ with trivialization at $x$. The pro-\'etale quotient of $\pi^N(X,x)$ is precisely $\pi^{{\rm \et}}(X,x)$.  

Summarizing, one has a diagram 
\begin{equation}\label{diag1.1}
\xymatrix{\pi^{{\rm alg}}(X,x) \ar[r]^{{\rm surj}} & \pi^{{\rm \et}}(X,x) \\
& \pi^N(X,x) \ar[u]_{{\rm surj}}
}
\end{equation}
The aim of our article is to define a Tannaka category ${\sf Strat}(X, \infty)$ over a perfect field $k$, which contains the category of $\sO_X$-coherent $\sD_X$-modules as a full subcategory, in such a way that its Tannaka group-scheme $\pi^{{\rm alg}, \infty}(X,x)$, which thus surjects onto $\pi^{{\rm alg}}(X,x)$, also surjects onto $\pi^N(X,x)$. In other words, we complete \eqref{diag1.1} to 
\begin{equation}\label{diag1.2}
\xymatrix{\pi^{{\rm alg}}(X,x) \ar[r]^{{\rm surj}} & \pi^{{\rm \et}}(X,x) \\
\ar[u]^{{\rm surj}} \pi^{{\rm alg},\infty}(X,x) \ar[r]^{{\rm surj}} & \pi^N(X,x) \ar[u]_{{\rm surj}}
}
\end{equation}
As a byproduct, we obtain a purely tannakian geometric description of $\pi^N(X,x)$ (see Corollary \ref{cor4.7}). Recall that we assume that $X$ is smooth. If in addition $X$ is proper, Nori himself described his fundamental group-scheme $\pi^N(X,x)$ as the Tannaka group-scheme of the category of essentially finite bundles \cite[Chapter~I]{N}.
He extends in \cite[Chapter~III]{N}  his construction to non-proper curves by using parabolic bundles. Lacking desingularization in characteristic $p>0$ makes it difficult to generalize his construction to the higher dimensional case.  If $k$ has characteristic $0$, then, as already mentioned,  $\pi^N(X,x)=\pi^{\rm \et}(X,x)$ is the Tannaka group-scheme of the category of finite flat connections \cite[Section~2]{EH}, or, equivalently, of the category of $\sO_X$-coherent $\sD_X$-modules
with finite monodromy group-scheme. 

Our construction (see Section 3, most particularly Definition \ref{defn3.2}) generalizes on a smooth variety defined over a  perfect characteristic $p>0$ field $k$  the construction of the category of  flat connections 
({\it loc. cit}) in characteristic $0$, and the construction of the stratified bundles ({\it loc. cit.}) in characteristic $p>0$. We now explain the main idea.

For $i\in \N$, let us define inductively the relative Frobenius $F^{(i)}: X^{(i)}\to X^{(i+1)}$ over $k$  in the usual manner. As $k$ is assumed to be perfect, one defines $X^{(-1)}=X\otimes_{k, F_{k}^{-1}} k$
where $F_k: \Spec k\to \Spec k$ is the absolute Frobenius of $k$, together with the relative Frobenius 
$F^{(-1)}: X^{(-1)}\to X^{(0)}$. Then one iterates to define inductively $F^{(i)}: X^{(i)}\to X^{(i+1)}$ for $i\in \Z, i<0$. 
 For 
$a,b \in \Z, a<b$ 
 we define $F^{(a,b)}: X^{(a)}\xrightarrow{F^{(a)}\circ \ldots \circ F^{(b-1)}} X^{(b)}$.

Recall that a stratified bundle is a sequence $(E^{(i)}, \sigma^{(i)}, i\in \N)$, where $E^{(i)}$ is a bundle on $X^{(i)}$, $\sigma^{(i)}: E^{(i)}\xrightarrow{\cong} F^{(i) *}E^{(i+1)}$ is a $\sO_{X^{(i)}}$-isomorphism. 
For $t\in \N, t\neq 0$,
we define an object of ${\sf Strat}(X, t )$ to be 
a sequence $(E^{(i)}, \sigma^{(i)}, i\in \N)$, where $E^{(i)}$ is a bundle on $X^{(i)}$, $\sigma^{(i)}: E^{(i)}\xrightarrow{\cong} F^{(i) *}E^{(i+1)}$ is a $\sO_{X^{(i)}}$-isomorphism for all $i\ge 1$, but for $i=0$,
$\sigma_0: F^{(-t,0)*}E^{(0)}\xrightarrow{\cong} F^{(-t,1) *}E^{(1)}$ is a $\sO_{X^{(-t)}}$-isomorphism.  The morphisms are the  ones between the bundles which respect all the structures.  We show (Theorem \ref{thm3.3}) that the obvious functor ${\sf Strat}(X, t)\subset {\sf Strat}(X, t+1)$, which assigns  $(E_i,  F^{(-t-1)*}\sigma_0, \sigma_i, i\ge 1)$ to  $(E_i, \sigma_0, \sigma_i, i\ge 1)$, 
induces a full embedding of Tannaka categories, where the fiber functor is simply the restriction of $E^{(0)}$ to the rational point $x$. Then ${\sf Strat}(X,\infty)$  is defined as the inductive limit over $t\to \infty$ of the categories ${\sf Strat}(X,t)$  (Corollary \ref{cor3.4}). In order to show that the Tannaka group-scheme  $\pi^{{\rm alg},\infty}(X,x)$ of ${\sf Strat}(X,\infty)$ surjects  onto $\pi^N(X,x)$, we use a slight modification of Nori's reconstruction theorem \cite[Chapter~I, Proposition~2.9]{N} of a torsor $h: Y \to X$ under a finite group scheme $G$ out of the induced functor $h^{\#}: {\sf Rep}_k(G)\to {\sf Coh}(X)$ which assigns to a finite dimensional $k$-linear representation $V$ of $G$ the vector bundle on $X$ which is defined by flat descent for $h$ on $\sO_Y\times_k V$  (Theorem \ref{thm2.4}). 

This allows us to define  the group-scheme homomorphism $\pi^{{\rm alg},\infty}(X,x)\to \pi^N(X,x)$ (Theorem \ref{thm4.3}). In order to show that this map induces the profinite quotient, we in particular use the categorial translation of injectivity and surjectivity of homomorphisms of Tannaka group-schemes  (\cite[Proposition~2.12]{DM}).  
  \\ \ \\
{\it Acknowledegements:} This work started while the second author was a guest of the ERC Advanced Grant 226257 in Essen.  Nguy{\^e}n Duy T\^an pointed out an inaccuracy in an earlier version of this work. The authors  had useful discussions with Alexander Beilinson and Ph\`ung H\^o Hai, who came in February 2010 to the conference on Algebraic Geometry and Arithmetic in Essen,  dedicated to the memory of Eckart Viehweg.

\section{ Nori's fundamental group-scheme}

Let $k$ be a field of characteristic $p>0$ and $X$ be a $k$-scheme. Let $x\in X(k)$ be a rational point and $i_x: x\to X$ be the closed embedding. 

Nori \cite[Chapter~II]{N} defines the category ${\sf N}(X,x)$ of triples $(Y\stackrel{f}{\to}X,G,y)$ where
\begin{enumerate}
 \item[(a)]$G/k$ is a finite group scheme,
\item[(b)]$f: Y\to X$ is a $G$-torsor,
\item[(c)]$y$ is a $k$-point of $Y$ lying above $x$.
\end{enumerate}
A morphism between two such triples $ (Y_i\stackrel{f_i}{\to}X,G_i,y_i)\ \ \ i=1,2$, 
is a pair $(\phi:G_1\to G_2,\psi:Y_1\to Y_2)$ such that $\psi$ an $X$-morphism which is $\phi$-equivariant and $\psi(y_1)=y_2$. Nori shows \cite[Chapter~II, Proposition~2]{N} that if  $X$ is reduced and geometrically connected, then the projective limit 
 $ \underset{{\sf N}(X,x)}{\ilim}G $ exits. He defines
\begin{defn} \label{defn2.1}
 Let $X$ be a reduced geometrically connected $k$-scheme, then it's Nori fundamental group-scheme is the profinite $k$-group-scheme
$$\pi^N(X,x)=\varprojlim_{{\sf N}(X,x)} G.$$
 
\end{defn}

 Since  giving a rational point $y\in f^{-1}(x)$ is the same as giving a trivialization $f^{-1}(x)\cong_k G$, ${\sf N}(X,x)$ is equivalent to the category of triples $(h: Y\to X, G, f^{-1}(x)\cong_k G)$, where the morphisms between two such objects are defined by $X$- torsor morphisms which respect the trivialization. We will not need this slightly different phrasing.  

\begin{defn} \label{defn2.2}
 Let $G$ be a finite $k$-group-scheme, and let $h: Y\to X$ be a $G$-torsor. Then it induces a functor $h^{\#}: {\sf Rep}_k(G)\to {\sf Coh}(X)$ which assigns to a finite dimensional $k$-representation $V$ the bundle on $X$ which comes by flat descent from $\sO_Y\otimes_k V$. 
\end{defn}
\begin{eig} \label{propies2.3}
\begin{itemize}
 \item[1)] The functor $h^{\#}$ defined in Definition \ref{defn2.2} is  exact, $k$-linear and compatible with the tensor structure. Thus it is a {\it fiber functor} in the sense of Deligne \cite[1.9]{D}. Since ${\sf Rep}_k(G)$ is a Tannaka category, it follows \cite[Corollaire~2.10]{D} that $h^{\#}$ is faithful.
\item[2)] The functor $i_x^*: {\sf Coh}(X)\to {\sf Vec}_k$ defined as the restriction to the rational point, with values in the category of finite dimensional $k$-vector spaces, is a fiber functor on the subcategory of vector bundles. 
The composite functor $i_x^*\circ h^{\#}: {\sf Rep}_k(G)\to {\sf Vec}_k$ is a fiber functor.  
\item[3)] Let $h_i:Y_i\to X$ be $G_i$ torsors where $i=1,2$. Let $\phi:G_1\to G_2$ be a group homomorphism and $\psi:Y_1\to Y_2$ be an equivariant map with respect to $\phi$. We denote by $\phi^*$  the induced functor ${\sf Rep}_k(G_2)\to {\sf Rep}_k(G_1)$. Then one has the equality $ h_2^{\#}= h_1^{\#}\circ \phi^*$ of functors. Indeed, if $V$ is a $G_2$-representation, 
$\psi^*: \sO_{Y_2}\otimes_k V\to \psi_*(\sO_{Y_1}\otimes_k \phi^*(V))$ induces a $\sO_X$-linear map $h_2^{\#}(V)\to h_1^{\#}(V)$ between those two vector bundles, which, after composing with $i_x^*$, is the identity on $V$. So 
$h_2^{\#}(V)= h_1^{\#}\circ \phi^* (V)$.
 \item[4)] Let $h:Y\to X$ be a $G$-torsor,  let $b:X'\to X$ be a morphism, and let $x'\in X'(k)$ be a rational point with $b(x')=x$. Let $Y'=Y\times_XX'\to X'$ and $h':Y'\to X'$ denote the projection. Then one has the equality  $ b^*\circ h^{\#} = h'^{\#}$ of functors. Indeed, denoting by $b': Y'\to Y$ the induced morphism,  if $V$ is a $G$-representation, $(b')^*: \sO_Y\otimes_k V\to (b')_* \sO_{Y'}\otimes_k V$ induces $\sO_{X'}$-linear map $b^*\circ h^{\#}(V)\to (h')^{\#}(V)$ between vector bundles, which is the identity on $V$ after composing with $i_{x'}$. So $b^*\circ h^{\#}=(h')^{\#}$.
\end{itemize}
\end{eig}

The following is a direct consequence of \cite[Proposition~2.9]{N}.
\begin{thm}\label{thm2.4}
Let $G$ be a finite $k$-group-scheme and let $F: {\sf Rep}_k(G)\to {\sf Coh}(X)$ be a fiber functor such that $i_x^*\circ F$  is  the forgetful functor $F_G: {\sf Rep}_k(G)\to {\sf Vec}_k$. 
Then there exists a unique object  $(Y\stackrel{h}{\to} X,G,y)$ of ${\sf N}(X,x)$  such that $F=h^{\#}$
 and $(h^{-1}(x), y)=(G,1).$ For  any other object $( Y'\xrightarrow{h'} X, G, y')\in {\sf N}(X,x)$ such that 
$ F=h^{\#}$,  there exists a unique isomorphism in  ${\sf N}(X,x)$ between $(Y\stackrel{h}{\to} X,G,y)$ and $( Y'\xrightarrow{h'} X, G, y')$.
\end{thm}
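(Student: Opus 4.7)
The approach is to reconstruct the torsor as the relative spectrum of $F(A)$, where $A = k[G]$ is the regular representation carrying the left-regular action; this is essentially Nori's strategy in \cite[Proposition~2.9]{N}, adapted to the current setup.

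First, I would build the candidate $Y$. The coordinate ring $A = \Gamma(G, \sO_G)$, with its left-regular $G$-action, is a commutative $k$-algebra in the tensor category ${\sf Rep}_k(G)$, and the right-regular $G$-action on $A$ commutes with the left-regular one and is by $k$-algebra automorphisms. Since $F$ is an exact $k$-linear tensor functor, $F(A)$ is a commutative $\sO_X$-algebra equipped with a $G$-action by $\sO_X$-algebra automorphisms, and I set $Y := \Spec_X F(A)$ with structure map $h: Y \to X$. The object $A$ is dualizable in ${\sf Rep}_k(G)$, so $F(A)$ is a dualizable object of $\Coh(X)$, hence a locally free $\sO_X$-module; the hypothesis $i_x^* \circ F = F_G$ forces its rank to be $|G|$ and, on restriction to the fiber, identifies $\Spec(i_x^* F(A)) = \Spec(A) = G$, producing the canonical $k$-point $y := 1_G$.

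To see that $h$ is a $G$-torsor, I would feed $F$ the classical shearing isomorphism: the self-map $(g_1, g_2) \mapsto (g_1 g_2, g_2)$ of $G \times G$ dualizes to an isomorphism $A \otimes_k A \xrightarrow{\cong} A \otimes_k A$ which, with the source given the diagonal left-regular action and the target given the left-regular action on the first factor only, is an isomorphism in ${\sf Rep}_k(G)$. Applying $F$ and using that representations with trivial action are sent to the $\sO_X$-modules $\sO_X \otimes_k(-)$ yields an isomorphism of $\sO_X$-algebras with $G$-action $F(A) \otimes_{\sO_X} F(A) \cong F(A) \otimes_k A$, i.e., $Y \times_X Y \cong Y \times_k G$. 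The same shearing applied to $A \otimes_k V$ for an arbitrary $V \in {\sf Rep}_k(G)$ gives, after $F$, a $G$-equivariant isomorphism of $F(A)$-modules $F(A) \otimes_{\sO_X} F(V) \cong F(A) \otimes_k V$; faithfully flat descent along the $G$-torsor $h$ then identifies $F(V)$ with $h^{\#}(V)$, naturally in $V$.

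For uniqueness, if $(Y', h', y')$ is another object with $F = (h')^{\#}$, the equalities $h_* \sO_Y = (h)^{\#}(A) = F(A) = (h')^{\#}(A) = h'_* \sO_{Y'}$ hold as $\sO_X$-algebras carrying compatible $G$-actions, yielding a canonical $G$-equivariant $X$-isomorphism $\psi: Y \xrightarrow{\cong} Y'$; the point-data are determined by the counit $A \to k$, $a \mapsto a(1_G)$, and the rigidity of the setup (together with Properties \ref{propies2.3}(3)) pins down the pair $(\phi, \psi)$ uniquely. The main obstacle is to carry out Step 2 rigorously: one must verify that the abstract shearing, which is a statement in representation theory over a point, is transported by $F$ into an isomorphism that simultaneously respects the $\sO_X$-algebra structure on $F(A)$ and the $G$-action, so that $\Spec_X$ actually witnesses the torsor condition. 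The remaining delicacies --- local freeness of $F(A)$ of rank exactly $|G|$ and the faithful flatness of $h$ needed for descent --- are forced by the rigidity of ${\sf Rep}_k(G)$ combined with the base-point hypothesis $i_x^* \circ F = F_G$.
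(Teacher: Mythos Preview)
Your approach is essentially the same as the paper's: both take $Y=\Spec_X F(k[G])$ and read off the fiber identification $h^{-1}(x)=G$ with $y=1_G$ from the hypothesis $i_x^*\circ F=F_G$. The difference is only in presentation: the paper invokes \cite[Proposition~2.9]{N} as a black box for the torsor property and for $F=h^{\#}$, whereas you unpack Nori's argument via the shearing isomorphism $A\otimes_k A\cong A\otimes_k A$ and descent. Your version is more self-contained; the paper's is shorter.

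One point where the paper is more explicit than you: for the second uniqueness assertion (any other $(Y',h',y')$ with $F=(h')^{\#}$ is uniquely isomorphic to $(Y,h,y)$), your sentence about ``the point-data are determined by the counit'' covers only the case $y'=1_G$ under the canonical identification $Y'\cong Y$. In general $y'$ corresponds to an arbitrary $g\in G(k)$, and the paper writes down the required isomorphism concretely as right-multiplication by $g$ on $Y$ paired with conjugation $h\mapsto ghg^{-1}$ on $G$. Your appeal to ``rigidity'' and Properties~\ref{propies2.3}(3) is not wrong, but it would be cleaner to name this map.
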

\begin{proof}
By Nori's reconstruction theorem \cite[Proposition~2.9]{N}, $F(k[G])$, where $k[G]$ is the regular representaton of $G$, is a finite $\sO_X$-algebra. The $G$-torsor $h:Y\to X$ is defined to be $\Spec_X F(k[G])$. By Property \ref{propies2.3} 2),  $i_x^*\circ F(k[G])=F_G(k[G])=k[G]$. 
 Said differently, $h^{-1}(x)=\Spec_x k[G]=G$.  Then $y$ is the rational point of $h^{-1}(x)$ which is $1\in G$. By the unicity in {\it loc. cit.}, $h$ is uniquely defined. If $y'=g\in h^{-1}(x)(k)$ is another rational point, then multiplication $g: Y\to Y$ by $g$, together with the conjugation $G\to G, h\mapsto ghg^{-1}$ defines an isomorphism $(h: Y\to X, G, y)\to (h: Y\to X, G, y')    $ in ${\sf N}(X,x)$.
\end{proof}

\section{The category of generalized stratified bundles}

The aim of this section is to define the category of {\it generalized stratified bundles}. We start with some notations. 

\begin{notas} \label{notas3.1}
Let $k$ be a perfect field of characteristic $p>0$, $X$ be a {\it smooth} scheme over $k$ which is geometrically irreducible.  

For $i\in \N$, we define  inductively the relative Frobenius $F^{(i)}: X^{(i)}\to X^{(i+1)}$ over $k$ in the usual manner, 
by defining $X^{(0)}=X$, $X^{(i+1)}$ to be the fiber product of $X^{(i)} \otimes_{k, F_k} k $ over the absolute Frobenius $F_k: \Spec k\to \Spec k$ of $k$, and $F^{(i)}$ to be the factorization of the absolute Frobenius $F_{X^{(i)}}: X^{(i)} \to X^{(i)}$ morphism. 

For $i\in \Z, i<0,$ we define inductively $F^{(i)}: X^{(i)}\to X^{(i+1)}$ over  $k$  as follows. First we set  $X^{(-1)}=X\otimes_{ F_{k}^{-1}} k$. Then we define  $F^{(-1)}: X^{(-1)}\to X$ to be  the relative Frobenius.  Similarly, we  define $X^{(-i-1)}=X^{(-i)}
\otimes_{ F_{k}^{-1}} k  $ together with the relative Frobenius $F^{(-i-1)}: X^{(-i-1)}\to X^{(-i)}$ over $k$.  

For 
$a,b \in \Z, a<b$ 
 we define $F^{(a,b)}: X^{(a)}\xrightarrow{F^{(a)}\circ \ldots \circ F^{(b-1)}} X^{(b)}$.
\end{notas}

Recall that a {\it stratified bundle} (see \cite[Section~1]{G}) is a sequence $(E^{(i)}, \sigma^{(i)}), i\in \N$, where $E^{(i)}$ is a $\sO_X$-coherent sheaf on $X^{(i)}$, $\sigma^{(i)}: E^{(i)}\xrightarrow{\cong} F^{(i) *}E^{(i+1)}$ is a $\sO_{X^{(i)}}$-isomorphism. One defines the {\it category} ${\sf Strat}(X)$ {\it of stratified bundles} by defining $${\rm Hom}( (D^{(i)}, \tau^{(i)}), (E^{(i)}, \sigma^{(i)}))$$
to be set of sequences $f_i: D^{(i)}\to E^{(i)}$ of morphisms of $\sO_{X^{(i)}}$-coherent sheaves, which commute with all the $\sigma_i$ and $\tau_i$. It is a fact ({\it loc. cit.}) that if $(E^{(i)}, \sigma^{(i)}, i\in \N)$ is a stratified sheaf, the $E^{(i)}$ are all locally free, and if $f=(f)_i, i\in \N$ is a morphism of stratified sheaves, then $f_i$ are vector bundle maps (i.e. locally split), so the categroy is abelian, rigid, and monoidal. Moreover, the Hom-sets are finite dimensional $k$-vector spaces. As $X$ is geometrically irreducible, the unit object ${\mathbb I}=(\sO_X, {\rm Id}), i\in \N$ fulfills ${\rm End}({\mathbb I})=k$. If now $X$ is endowed with a rational point $x\in X(k)$, then $\omega_x: {\sf Strat}(X)\to {\sf Vec}_k, ( E^{(i)}, \sigma^{(i)}) \mapsto E_0|_x$ is a fiber functor in the sense of Deligne \cite[1.9]{D}, and thus yields the structure of a Tannaka category on  ${\sf Strat}(X)$. A fundamental property due to dos Santos is that the corresponding Tannaka $k$-group-scheme
${\rm Aut}^{\otimes}(\omega_x)$ is pro-smooth  (\cite[Corollary~12]{dS}, \cite[Corollary~7]{dSOW}).

\begin{defn} \label{defn3.2} Let $t\geq 0$ be an integer. 
A $t$-{\it stratified bundle}  is a sequence $$(E^{(i)}, \sigma^{(i)}, i\in \N),$$ where $E^{(i)}$ is a $\sO_X$-coherent sheaf on $X^{(i)}$, $$\sigma^{(i)}: E^{(i)}\xrightarrow{\cong} F^{(i) *}E^{(i+1)}$$ is a $\sO_{X^{(i)}}$-isomorphism for $i\ge 1$ and for $i=0$, $$\sigma^{(0)}: F^{(-t,0)*}E^{(0)}\xrightarrow{\cong} F^{(-t,1) *}E^{(1)}$$ is a $\sO_{X^{(-t)}}$-isomorphism.

One defines the {\it category} ${\sf Strat}(X,t)$ {\it of} $t$-{\it stratified bundles} by defining $${\rm Hom}( (D^{(i)}, \sigma^{(i)}), (E^{(i)}, \tau^{(i)}))$$
to be set of sequences $f_i: D^{(i)}\to E^{(i)}$ of morphisms of $\sO_X$-coherent sheaves, which commute with all the $\sigma_i$ and $\tau_i$.

In particular, ${\sf Strat}(X,0)={\sf Strat}(X)$.
\end{defn}

\begin{ex}\label{ex1}
We now give an example of a non-trivial $1$-stratified bundle on $X=\A^1_k=\Spec(k([x])$. Thus $X^{(i)}=\Spec(k[x_i])$ where the relative Frobenius  $X^{(i)}\to X^{(i+1)}$ is induced by $x_{i+1}\to x_i^p$. For simplicity let us assume $p={\rm char}(k)=2$. Let $V$ be a $2$-dimensional vector space over $k$ with basis $e_1,e_2$. Define $$E^{(i)} = \sO_{X^{(i)}}\tensor_kV \ \ \forall \ i \ \geq 0$$ and $$\sigma^{(i)}:E^{(i)}\to F^{(i)*}E^{(i+1)}, \ \ i\geq 1$$  to be the isomorphism induced by the identity on $V$. We define 
$$\sigma^{(0)}:F^{(-1,0)*}E^{(0)} \to F^{(-1,1)*}E^{(1)}$$
to be the isomorphism defined by sending $$e_1 \to e_1, \ \ \ e_2 \to x_{-1}e_1 +e_2 .$$ 
We claim that the $-1$-stratified bundle thus defined is not isomorphic to the trivial stratified bundle of rank $2$. If indeed this were the case, then we would have a $k[x]$-module automorphism $\phi:k[x]\tensor_k V \to k[x]\tensor_kV$, such that $$\phi\tensor_{k[x]}k[x_{-1}]=\sigma^{(0)}.$$
This is impossible since $x_{-1}$ is not contained in $k[x]$. It can be shown (see \eqref{ex2}) that this $-1$-stratified bundle ``arises''  from the non-trivial $\alpha_p$ torsor on $\A^1_k$  defined by the relative Frobenius of $\A^1_k$.
\end{ex}

\begin{thm} \label{thm3.3} The notations are as in \ref{notas3.1}.
\begin{itemize}
\item[1)] For every integer $t\geq 0$, ${\sf Strat}(X,t)$ is a 
$k$-linear, abelian, rigid, tensor category. 
\item[2)] The functor 
\ga{}{(+): {\sf Strat}(X, t)\subset {\sf Strat}(X, t+1) \notag\\
   (E_i, \sigma_0, \sigma_i, i\ge 1)\mapsto (E_i,  F^{(-t-1)*}\sigma_0, \sigma_i, i\ge 1),\notag} 
induces a full faithful embedding of $k$-linear, abelian, rigid, tensor categories.
\item[3)]
If $x\in X(k)$ is a rational point, the functor 
\ga{}{\omega_x: {\sf Strat}(X,t)\to {\sf Vec}_k \notag \\
 ( E^{(i)}, \sigma^{(i)}) \mapsto E_0|_x \notag} is a fiber functor, which makes $({\sf Strat}(X, t),\omega_x)$ a Tannaka category. 

\end{itemize}
\end{thm}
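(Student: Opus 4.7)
The plan is to isolate the genuinely new feature of ${\sf Strat}(X,t)$ as a single faithfully-flat-descent step and then reduce everything to the theory of ordinary stratified bundles recalled just before Definition~\ref{defn3.2}. The key preliminary step I would carry out is to show that every component $E^{(i)}$ of a $t$-stratified bundle is locally free. For $i\ge 1$ this is immediate, because $(E^{(i)},\sigma^{(i)})_{i\ge 1}$ is an ordinary stratified bundle on $X^{(1)}$, hence locally free by the cited fact. For $i=0$ the isomorphism $\sigma^{(0)}$ exhibits $F^{(-t,0)*}E^{(0)}\cong F^{(-t,1)*}E^{(1)}$ as the pullback of a locally free sheaf, so it is locally free on $X^{(-t)}$; since $X$ is smooth over the perfect field $k$, Kunz's theorem makes the composite relative Frobenius $F^{(-t,0)}$ faithfully flat, and faithfully flat descent then transports local freeness back to $E^{(0)}$ on $X$.

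Part (1) would then follow by a routine verification: tensor products, internal Homs (hence duals), kernels and cokernels are defined componentwise, using that $F^{(i)*}$ and $F^{(-t,0)*}$ commute with these operations on locally free sheaves; the identity object is $(\sO_{X^{(i)}},\mathrm{Id})$, and $\mathrm{End}(\mathbb{I})=k$ follows from the geometric irreducibility of $X$. For Part (2), faithfulness of $(+)$ is clear as it is the identity on morphisms. For fullness, given $(f_i):(+)(D)\to (+)(E)$ in ${\sf Strat}(X,t+1)$, the compatibility square at index~$0$ lives over $X^{(-t-1)}$ and is exactly the $F^{(-t-1)*}$-pullback of the square required in ${\sf Strat}(X,t)$ over $X^{(-t)}$; faithful flatness of $F^{(-t-1)}$ then implies the latter square commutes, so $(f_i)$ is already a morphism in ${\sf Strat}(X,t)$. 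Compatibility of $(+)$ with the tensor structure is built into its definition.

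For Part (3), I would invoke Deligne's criterion \cite[Corollaire~2.10]{D}: in a $k$-linear abelian rigid $\otimes$-category with $\mathrm{End}(\mathbb{I})=k$, any exact $k$-linear $\otimes$-functor to ${\sf Vec}_k$ is automatically faithful, so it suffices to check that $\omega_x$ is exact, $k$-linear and tensor-compatible. The last two are immediate from the definition, and exactness reduces, via the preliminary local-freeness step, to the statement that restricting a short exact sequence of locally free $\sO_X$-modules to the rational point $x$ preserves exactness. The main obstacle is therefore the preliminary local-freeness step at index $0$: the classical iterative-Frobenius argument is unavailable there, and one genuinely needs the faithfully flat descent along $F^{(-t,0)}$ to transport local freeness from $X^{(-t)}$ back to $X$. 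Once this single point is settled, the remainder of the theorem is a routine adjustment of the $t=0$ case.
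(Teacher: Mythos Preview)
Your proposal is correct and follows essentially the same route as the paper: reduce to the ordinary stratified theory on $X^{(1)}$ via the shift $(E^{(i)})_{i\ge 1}$ (what the paper calls ${\rm Ver}$), and handle the single remaining index $i=0$ by faithfully flat descent along the relative Frobenius $F^{(-t,0)}$. Your argument for fullness of $(+)$ via faithful flatness of $F^{(-t-1)}$ is exactly the content the paper compresses into the one-line ``factorization of \eqref{3.1} through $(+)$''; the only stylistic difference is in Part~(3), where the paper proves exactness of $\omega_x$ by the identity $\omega_{x^{(1)}}\circ{\rm Ver}=\omega_x$ and then quotes exactness of $\omega_{x^{(1)}}$ on ${\sf Strat}(X^{(1)})$, whereas you argue directly that a short exact sequence of locally free sheaves restricts exactly at $x$---both are fine.
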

\begin{proof}
We show 1). Since ${\sf Strat}(X,0)={\sf Strat}(X)$, we assume $t>0$.  If $(E^{(i)}, \sigma^{(i)}, i\in \N)$ is an object in ${\sf Strat}(X, t)$, then
 $(E_+^{(i)}=E^{(i+1)}, \sigma_{+}^{(i)}=\sigma^{(i+1)}, i\in \N)$
 is an object ${\rm Ver}(E^{(i)}, \sigma^{(i)}, i\in \N)\in {\sf Strat}(X^{(1)})$. Since $E^{(i)}$ is locally free, by the isomorphism $\sigma^{(0)}$, $F^{(-t,0)*}E^{(0)}$ is locally free. Since $X$ is smooth, the relative Frobenius is flat, thus by flat descent, $E^{(0)}$ is locally free as well. So ${\sf Strat}(X)$ is rigid and monoidal. On the other hand, 
\ml{3.1}{{\rm Hom}((D^{(i)}, \tau^{(i)}, i\in \N), (E^{(i)}, \sigma^{(i)}, i\in \N)) \\ \subset 
{\rm Hom}({\rm Ver}(D^{(i)}, \tau^{(i)}, i\in \N), {\rm Ver}(E^{(i)}, \sigma^{(i)}, i\in \N)) }
 and is obviously a $k$-vector space. So the Hom-sets are finite dimensional $k$-vector spaces. Moreover, any morphism $f=(f^{(i)}, i\in \N)$ is such that $f^{i}, i\ge 1$ is a morphism of vector bundles. Thus by the ismorphisms $\tau^{(0)}, \sigma^0$, Ker, Im and Coker of $f^{(0)}$ are pulled back to vector bundles on $X^{(-t)}$ via $F^{(-t,0)}$, thus by flat descent again, there are vector bundles on $X$. We conclude that ${\sf Strat}(X,t)$ is an abelian category. This shows 1).

2) follows immediately from the factorization of \eqref{3.1} through (+).

We show 3): the point $x\in X(k)$ maps to $x^{(1)}\in X^{(1)}(k)$, and the map $x\to x^{(1)}$ is the identity on the residue fields $k(x)=k(x^{(1)})=k$.  If $0\to A\to B\to C\to 0$ is an exact sequence in ${\sf Strat}(X,t)$, then 
$0\to {\rm Ver}(A)\to {\rm Ver}(B)\to {\rm Ver}(C)\to 0$ is an exact sequence in ${\sf Strat}(X^{(1)})$, thus $0\to \omega_{x^{(1)}}({\rm Ver}(A))\to \omega_{x^{(1)}}({\rm Ver}(B))\to \omega_{x^{(1)}}({\rm Ver}(C))\to 0$
is an exact sequence in ${\sf Vec}_k$. But 
\ga{3.2}{ \omega_{x^{(1)}}({\rm Ver}(A)) =    \omega_{x}(A).  }
This shows that $\omega_x$ is exact. Furthermore, $\omega_x$ is obviously $k$-linear and compatible with the tensor structure. This finishes the proof. 

\end{proof}

\begin{cor} \label{cor3.4}
 Let the notations be as in Theorem \ref{thm3.3}. The category
$${\sf Strat}(X, \infty)=\varinjlim_{+, t\in \N} {\sf Strat}(X,t)$$ is a $k$-linear, abelian, rigid tensor category, on which, if $X$ has a rational point $x\in X(k)$, the functor $\omega_x$ is a fiber functor. 
\end{cor}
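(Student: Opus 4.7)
The plan is to deduce everything formally from Theorem \ref{thm3.3}. By part (2) of that theorem, the transition functors $(+): {\sf Strat}(X,t)\to {\sf Strat}(X,t+1)$ are fully faithful embeddings of $k$-linear, abelian, rigid tensor categories, which additionally preserve the fiber functor $\omega_x$ since $(+)$ leaves the component $E^{(0)}$ unchanged and only replaces $\sigma^{(0)}$ by $F^{(-t-1)*}\sigma^{(0)}$. Thus the filtered $2$-colimit ${\sf Strat}(X,\infty)$ along fully faithful exact tensor functors should inherit all structures in sight. Concretely, I would realize an object of ${\sf Strat}(X,\infty)$ as a pair $(t,E)$ with $E\in{\sf Strat}(X,t)$, and define
\eq{3.3}{\Hom_{{\sf Strat}(X,\infty)}((s,D),(t,E)) := \Hom_{{\sf Strat}(X,r)}\bigl((+)^{r-s}D,(+)^{r-t}E\bigr)}
for any $r\ge\max(s,t)$; full faithfulness of $(+)$ makes this independent of the choice of $r$ and automatically yields finite-dimensional $k$-vector space Hom-sets.

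Next I would upgrade this to a rigid abelian tensor structure. Given a morphism $f:(s,D)\to(t,E)$, pick $r\ge\max(s,t)$ so that $f$ is represented by an honest morphism in ${\sf Strat}(X,r)$; set its kernel, image and cokernel in ${\sf Strat}(X,\infty)$ to be those computed in ${\sf Strat}(X,r)$. Because $(+)$ is exact (Theorem \ref{thm3.3}(1)(2)), these are independent of the choice of $r$ up to canonical isomorphism, so ${\sf Strat}(X,\infty)$ is abelian. Similarly, since $(+)$ commutes with tensor products and duals, one sets
\eq{3.4}{(s,D)\otimes(t,E) := \bigl(r,(+)^{r-s}D\otimes(+)^{r-t}E\bigr), \quad (s,D)^{\vee}:=(s,D^{\vee}),}
for $r\ge\max(s,t)$. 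This gives a well-defined $k$-linear abelian rigid tensor structure. For the fiber functor, compatibility of $\omega_x$ with $(+)$ means that the rule $\omega_x((t,E)):=\omega_x(E)=E^{(0)}|_x$ is well-defined; exactness, faithfulness and tensor-compatibility are inherited levelwise from Theorem \ref{thm3.3}(3).

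I do not expect a real obstacle here, because every required property of ${\sf Strat}(X,\infty)$ is witnessed by a representative at a finite level $t$ and the transition functors preserve the relevant structure. The only point that deserves a moment of care is ensuring that constructions such as kernels, tensor products, and the fiber functor assemble into functors that are independent of the representing level, which is precisely what full faithfulness and strict compatibility of $(+)$ with the $k$-linear abelian tensor and $\omega_x$ structures guarantee.
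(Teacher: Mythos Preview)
Your proposal is correct and is exactly the routine verification the paper intends: the authors state Corollary~\ref{cor3.4} without proof, treating it as an immediate consequence of Theorem~\ref{thm3.3}, and your argument spells out precisely why a filtered $2$-colimit along fully faithful exact $k$-linear tensor functors compatible with $\omega_x$ inherits all the Tannakian structure levelwise.
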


\begin{defn} \label{defn3.5} The notations are as in Theorem \ref{thm3.3}.
\begin{itemize}
\item[1)] We define $\pi^{{\rm alg}}(X,x)$ to be the Tannaka $k$-group scheme ${\rm Aut}^{\otimes}(\omega_x)$ of $({\sf Strat}(X), \omega_x)$.
\item[2)] We define $\pi^{{\rm alg}, \infty}(X,x)$ to be the Tannaka $k$-group scheme ${\rm Aut}^{\otimes}(\omega_x)$ of $({\sf Strat}(X, \infty), \omega_x)$. 
\end{itemize}
\end{defn}
The functor $(+): {\sf Strat}(X) \to {\sf Strat}(X, \infty)$ defines 
the homomorphism 
\ga{3.3}{(+)^*: \pi^{{\rm alg}, \infty}(X,x) \to \pi^{{\rm alg}}(X,x).}
\begin{lem} \label{lem3.6}
The homomorphism $(+)^*$ in  \eqref{3.3} is faithfully flat. \end{lem}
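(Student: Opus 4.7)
The plan is to apply the Tannakian criterion for faithful flatness \cite[Proposition~2.12]{DM}: for an exact $k$-linear tensor functor between Tannakian categories, the dual homomorphism on Tannaka group-schemes is faithfully flat if and only if the functor is fully faithful and every sub-object in the target of an object in the essential image is again (up to isomorphism) in the essential image.

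First, the full faithfulness of $(+):{\sf Strat}(X)\to{\sf Strat}(X,\infty)$ follows from Theorem \ref{thm3.3}(2), which gives full faithfulness at each finite stage ${\sf Strat}(X,t)\to{\sf Strat}(X,t+1)$, combined with the fact that Hom-sets in ${\sf Strat}(X,\infty)=\varinjlim_t{\sf Strat}(X,t)$ are filtered colimits of the Hom-sets at each finite level.

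The substantive step is the sub-object condition. A sub-object of $(+)(E)$ in ${\sf Strat}(X,\infty)$, with $E=(E^{(i)},\sigma^{(i)})\in{\sf Strat}(X)$, is represented by a $t$-stratified sub-bundle $(E'^{(i)}\hookrightarrow E^{(i)},\sigma'^{(i)})$ for some $t\ge 0$: for $i\ge 1$, $\sigma^{(i)}$ restricts to $\sigma'^{(i)}$; and the pullback $F^{(-t,0)*}\sigma^{(0)}:F^{(-t,0)*}E^{(0)}\xrightarrow{\cong}F^{(-t,1)*}E^{(1)}$ restricts to an isomorphism $\sigma'^{(0)}:F^{(-t,0)*}E'^{(0)}\xrightarrow{\cong}F^{(-t,1)*}E'^{(1)}$. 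Viewing $A:=\sigma^{(0)}(E'^{(0)})$ and $B:=F^{(0)*}E'^{(1)}$ as coherent sub-sheaves of $F^{(0)*}E^{(1)}$ on $X=X^{(0)}$, this condition is exactly $F^{(-t,0)*}A=F^{(-t,0)*}B$ as sub-sheaves of $F^{(-t,0)*}F^{(0)*}E^{(1)}=F^{(-t,1)*}E^{(1)}$.

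The key step is then faithfully flat descent along $F^{(-t,0)}:X^{(-t)}\to X$, which is faithfully flat because $X$ is smooth over the perfect field $k$, so every relative Frobenius is finite, flat and surjective. This forces $A=B$ as sub-sheaves of $F^{(0)*}E^{(1)}$, hence $\sigma^{(0)}$ restricts to an isomorphism $E'^{(0)}\xrightarrow{\cong}F^{(0)*}E'^{(1)}$. Together with the restrictions $\sigma'^{(i)}$ for $i\ge 1$, this promotes $(E'^{(i)})$ to a stratified bundle $E'\in{\sf Strat}(X)$, and by construction its image under the composition ${\sf Strat}(X)\to{\sf Strat}(X,t)\to{\sf Strat}(X,\infty)$ is the given sub-object. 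The descent step is the only non-formal ingredient; the rest is bookkeeping on filtered colimits and the Tannakian dictionary.
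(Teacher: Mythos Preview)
Your proof is correct and follows essentially the same route as the paper: invoke the Tannakian criterion (the paper cites \cite[Proposition~2.21]{DM} for the same statement), use the full faithfulness of $(+)$ from Theorem~\ref{thm3.3}(2), and verify the sub-object condition by showing that for a sub-$t$-stratified bundle $E'\subset(+)E$ one has $\sigma^{(0)}(E'^{(0)})=F^{(0)*}E'^{(1)}$ inside $F^{(0)*}E^{(1)}$, so that $E'$ is already an honest stratified bundle. The only difference is presentational: the paper writes down the candidate $B'=(F^{(0)*}B^{(1)},B^{(i)},i\ge1)\in{\sf Strat}(X)$ directly and asserts $(+)B'=B$ from the equality $F^{(-t,0)*}F^{(0)*}B^{(1)}=F^{(-t,0)*}B^{(0)}$, leaving implicit the faithfully flat descent along $F^{(-t,0)}$ that you spell out explicitly.
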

\begin{proof}
We apply  \cite[Proposition~2.21]{DM}. As     (+) is fully faithful, the lemma is equivalent to saying that if $A$ is an object on ${\sf Strat}(X)$, and $B\subset (+)A$ is a subobject in ${\rm Strat}(X, \infty)$, then  there is a subobject $B'\subset A$ in ${\sf Strat}(X)$ such that $B=(+)B'$. One has that ${\rm Ver}(B)\subset {\rm Ver}(A)$ is a subobject in ${\sf Strat}(X^{(1)})$, thus $F^{(0) *}B^{(1)}\subset A^{(0)}$ is a subvector bundle with the property that $F^{(-t,0)*}\circ F^{(0) *}B^{(1)}=  F^{(-t,1)*}B^{(1)}=
F^{(-t,0)*}B^{(0)}$. Thus $B'=(F^{(0) *}B^{(1)}, B^{(i)}, i\ge 1, F^{(0) *}, \sigma^{(i)}, i\ge 1)\subset A$ is a subobject of $A$  such that $(+)B'=B$. This finishes the proof. 
\end{proof}

\section{Comparison of $\pi^{{\rm alg},\infty}(X,x)$ with $\pi_1^N(X,x)$}
In order to achieve the comparison, we start with a construction.  
\begin{construction}\label{const1} 
The notations are as in \ref{notas3.1}, and $x\in X(k)$ is a rational point. 
Let $(h:Y\to X, G, y)$ be an object of ${\sf N}(X,x)$. Using this object, we construct a tensor functor 
$$h^*:{\sf Rep}_k(G) \to {\sf Strat}(X,\infty)$$ 
together with a factorization of functors
\ga{4.1}{ \xymatrix{{\sf Rep}_k(G)\ar[r]^{h^*} \ar[dr]_{F_G} & {\sf Strat}(X,\infty)\ar[d]^{\omega_x}\\
 & {\sf Vec}_k
} }
Here  $F_G: {\sf Rep}_k(G) \to {\sf Vec}_k$ is the forgetful functor.

Recall that if $G$ is a finite $k$-group-scheme, there is an exact sequence of finite $k$-group schemes $1\to G_0\to G\to G_{{\rm \et}}\to 1$, where $G_0$ is the $1$-component of $G$  and $G_{{\rm \et}}$ is  \'etale. 
Furthermore, as $k$ is perfect,  $G_{{\rm red}}\subset G$ is a closed subgroup-scheme and the composite $G_{{\rm red}}\xrightarrow{\iota}  G\to G_{{\rm \et}}$ is an isomorphism. Thus $\iota$  yields on  $G$  the structure of a  semi-direct product of $G_{{\rm \et}}$ by $G_0$. 
 The construction of 
$h^*$ will be such  that the image of $h^*$ is contained in ${\sf Strat}(X,t)$, where $t$ is a natural number such that the image of the  $k$-group-scheme homomorphism $G^{(-t)}\to G$ is equal to $G_{{\rm \et}}$.

Let $V$ be a finite dimensional $k$-representation of  $G$.
We set
\ga{4.2}{ E^{(0)}=h^{\#}(V).}        
 For $i\in \N\setminus \{0\}$, the relative Frobenius is an isomorphism of the \'etale $k$-group-schemes 
\ga{4.3}{ F^{(0,i)}: G_{{\rm \et}} \xrightarrow{\cong}  G^{(i)}_{{\rm \et}}.}
Thus $ \iota(G)\circ F^{(0,i)-1}: G^{(i)}_{{\rm \et}}\subset G$ is a closed embedding and composing with it defines a $G^{(i)}_{{\rm \et}}$-action on $V$.  Since $h:Y\to X$ is a $G$-torsor, for $i\geq 0$, $h^{(i)}: Y^{(i)}\to X^{(i)}$ is also a $G^{(i)}$-torsor. Let $h^{(i)}_{\rm \et}: 
Y^{(i)}_{\rm \et}\to X^{(i)}$ be the induced $G^{(i)}_{\rm \et}$-torsor obtained by moding out by $G^{(i)}_0$. We define
\ga{4.4}{ E^{(i)}=(h^{(i)}_{\rm \et})^{\#}(V).     }
One has 
\ga{4.5}{\sigma^{(i)}: E^{(i)} \xrightarrow{=}F^{(i)*} E^{(i+1)} , \ i\in \N\setminus \{0\}.}
The object $h^*(V) \in {\sf Strat}(X,t)$ which we wish to construct will have the property
\ga{4.6}{{\rm Ver}( h^*(V))=(E^{(i)}, \sigma^{(i)}, i\ge 1).}
It remains to define $\sigma^{(0)}$.
By definition,  
\ga{4.7}{F^{(0)*}E^{(1)}=  (h^{(0)}_{\rm \et})^{\#}(V)  = (h_{\rm \et})^{\#}(V)   .}
Let
$t$ be a natural number such that the image of  $G^{(-t)}\to G$ is equal to $G_{{\rm \et}}$.
One has the following commutative diagram of  $k$-varieties.
\ga{4.8}{\xymatrix{
Y^{(-t)} \ar[rr]^{F^{(-t,0)}} \ar[dd] \ar[rd] &         &  Y \ar[dd] \ar[rd] &  \\&  Y^{(-t)}_{\rm \et} \ar[ld]_{h^{(-t)}} \ar@{-->}[ru]^{\exists ! \ \lambda} \ar[rr]^{F^{(-t,0)}\  } &  & Y_{\rm \et}\ar[ld]^{h} \\
X^{(-t)} \ar[rr]^{F^{(-t,0)}} 		  &  &  X  & 
}}
The morphism $F^{(-t,0)}: Y^{(-t)}\to Y$ is equivariant under  $F^{(-t,0)}:  G^{(-t)}\to G$. Likewise, the 
 morphism $F^{(-t,0)}: Y^{(-t)}_{\rm \et}\to Y_{\rm \et}$
is equivariant under  $F^{(-t,0)}:  G^{(-t)}_{\rm \et}\to G_{\rm \et}$.
The commutativity of the diagram implies that 
\ga{4.9}{\lambda^*(\sO_Y\tensor_k V)= F^{(-t,0)*}(\sO_{Y_{\rm \et}}\tensor_kV)=
F^{(-t,1)*}(\sO_{Y^{(1)}_{\rm \et}}\tensor_kV)
}
equivariantly for the action of $G^{(-t)}_{\rm \et}$.
Thus
\ga{4.10}{ (h_{\rm \et}^{(-t)})^{\#}(V)=   F^{(-t,0)*}E^{(0)}=F^{(-t,1)*}E^{(1)}.}
We define $\sigma^{(0)}: F^{(-t,0)*}E^{(0)}=F^{(-t,1)*}E^{(1)}$ to be the equality of \eqref{4.10}.

Thus, starting with $V\in {\sf Rep}_k(G)$, we have constructed an object $h^*(V)=(E^{(i)}, \sigma^{(i)}, i\in \N) \in {\sf Strat}(X,t)$. 
Clearly, any $\phi\in {\rm Hom}_{{\sf Rep}_k(G)}(V, W)$
induces $h^*(\phi)\in {\rm Hom}_{{\sf Strat}(X,t)}(h^*(V), h^*(W))$. 
This defines the functor 
\ga{4.11}{ h^*: {\sf Rep}_k(G)\to {\sf Strat}(X,\infty).}
by composing with (+). 
Moreover, one has
\ga{4.12}{ h^{*}(V)_x= (\sO_Y\tensor_k V)_y = V.}
This shows the commutativity of \eqref{4.1}.
\end{construction}

\begin{rmk}
In the above construction we use the fact that for a finite flat group scheme $G$ over a perfect field $k$, the epimorphism $G\to G_{\rm \et}$ admits a  section (necessarily unique). In other words $G_{\rm \et}$ can be canonically thought of as a subgroup scheme of $G$ via the identification $ G_{\rm red}=G_{\rm \et}$. When $k$ is not a perfect field,  $G_{\rm red}$ may not be a subgroup scheme, 
(for example,  $G=\Spec k[t]/(t^{p^2}-at^p), \ a\in k\setminus k^p$, see 
\cite[Chapter~III,~Exercice~(3.2)]{GM},)
and the above construction of $h^*$ does not make sense.  This is the  reason why we assume throughout $k$ to be perfect  perfect. We thank Nguy{\^e}n Duy T\^an for this important remark. 
\end{rmk}

\begin{ex}\label{ex2}Let $p={\rm char}(k)=2$ for simplicity and let $G=\alpha_2=\Spec(k[t]/t^2)$. Let $X=\A^1_k = \Spec(k[x])$. Let $P=\Spec(k[u])$, and $h:P\to X$ be the relative Frobenius defined by $x\to u^2$. Then $h$ is a $G$-torsor. Thus by  Construction \eqref{const1}, one has a functor 
$$h^*:{\sf Rep}_k(G)\to {\sf Strat}(X,-1).$$
 We compute now that $h^*(k[G])$ is nothing but the $-1$-stratified bundle defined in Example \ref{ex1}. Here $k[G]=k[v]/(v^2)$ is the regular representation of $G$. As in  Example \eqref{ex1}, let $X^{(i)}=k[x_i]$. Let $h^*(k[G])=(E^{(i)},\sigma^{(i)},i\in \N)$.  As all schemes are affine, we confuse coherent sheaves with corresponding modules.
Since $G_{\rm \et}$ is trivial, by definition of $h^*$ we see that 
$$ E^{(i)}= k[x_i]\tensor_k k[v]/(v^2) \ \ \ \forall \ i \geq 1$$
with $$\sigma^{(i)}:E^{(i)}\to F^{(i)*}E^{(i+1)} \ \ \ i\geq 1$$
induced by the identity map on $k[v]/(v^2)$. Then $E^{(0)}$ is 
 by definition the $k[x]$-module of invariants
of $k[u]\tensor_k k[v]/(v^2)$, where the action of  $G=\Spec k[t]/(t^2)$  is defined  by $$ u\to u+t, \ \ v\to v+t.$$
 Since $(u+v)^2=u^2=x$, one has  
  $E^{(0)}=k[x]\cdot 1\oplus k[x]\cdot (u+v)$.  On $P$ we have an identification  
$$h^*E^{(0)} = k[u]\tensor_k k[v]/(v^2)$$ 
defined by $\tau: 1\mapsto 1 \otimes 1, u+v\mapsto u\otimes 1+ 1\otimes v $.
The map $\sigma^{(0)}$ is nothing but the pull back of  $\tau$ via the  isomorphism 
$X^{(-1)}\to P$ defined  by $$k[u]\to k[x_{-1}], \ \ u\to x_{-1}.$$
We thus see that  $$\sigma^{(0)}:k[x_{-1}]\cdot 1\oplus   k[x_1]\cdot (u+v) \longrightarrow k[x_{-1}]\tensor k[v]/(v^2)$$
is defined by  $1 \mapsto 1\otimes, (u+v)\mapsto u\otimes 1+ 1\otimes v$. It is then an elementary exercise to see that the stratified bundle $h^*(k[G])$ is isomorphic  to the $-1$ stratified bundle defined in Example \ref{ex1}.
\end{ex}

\begin{lem} \label{lem4.2}
 The functor $h^*$ defined in \eqref{4.11}  is $k$-linear, exact,  compatible with the tensor structure and faithful. 
\end{lem}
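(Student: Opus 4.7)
The plan is to verify each of the four properties separately, observing that all of them reduce, level-by-level, to the corresponding statements for the functors $h^{\#}$ and $(h^{(i)}_{\rm \et})^{\#}$ that were already established in Properties \ref{propies2.3}. Recall from the construction that for $V \in {\sf Rep}_k(G)$, the object $h^*(V)$ has components $E^{(0)}=h^{\#}(V)$ and $E^{(i)}=(h^{(i)}_{\rm \et})^{\#}(V)$ for $i\ge 1$, and the structural isomorphisms $\sigma^{(i)}$ are either canonical identifications induced by the identity on $V$ (for $i\ge 1$) or the tautological equality \eqref{4.10} (for $i=0$). In particular, they are natural in $V$.

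For $k$-linearity, exactness, and tensor compatibility, I would argue as follows. By Properties \ref{propies2.3}(1), each of $h^{\#}$ and $(h^{(i)}_{\rm \et})^{\#}$ is $k$-linear, exact and compatible with tensor products, as these are fiber functors. A morphism in ${\sf Strat}(X,t)$ is by Definition \ref{defn3.2} a sequence of $\sO_{X^{(i)}}$-linear maps commuting with the $\sigma^{(i)}$; since the $\sigma^{(i)}$ in $h^*(V)$ are canonical in $V$, any $G$-equivariant map $V\to W$ induces such a sequence automatically. Similarly, exactness is detected componentwise (kernel, image and cokernel of a morphism of stratified bundles are formed at each level and are automatically vector bundles by flat descent, as shown in the proof of Theorem \ref{thm3.3}), so exactness of $h^*$ follows from exactness at each level. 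For the tensor structure, the canonical identifications $h^{\#}(V\otimes W)\cong h^{\#}(V)\otimes h^{\#}(W)$ (and the analogue for $(h^{(i)}_{\rm \et})^{\#}$) assemble into an isomorphism $h^*(V\otimes W)\cong h^*(V)\otimes h^*(W)$ compatible with all $\sigma^{(i)}$.

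For faithfulness, the quickest route is the commutative diagram \eqref{4.1}: the composite $\omega_x\circ h^*$ equals the forgetful functor $F_G:{\sf Rep}_k(G)\to {\sf Vec}_k$, which is faithful (it is the underlying-vector-space functor). Hence $h^*$ itself must be faithful. Alternatively, faithfulness already follows at level zero from the faithfulness of $h^{\#}$ (Properties \ref{propies2.3}(1)).

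I do not expect any serious obstacle. The only point requiring care is to confirm that a componentwise sequence of morphisms actually defines a morphism in ${\sf Strat}(X,\infty)$, i.e.\ that the naturality of the $\sigma^{(i)}$ in $V$ is strong enough for the compatibility to hold on the nose at $i=0$; but this is immediate because $\sigma^{(0)}$ is defined by the equality \eqref{4.10}, which is manifestly functorial in $V$.
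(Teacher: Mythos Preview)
Your proposal is correct and follows essentially the same approach as the paper: reduce to the componentwise properties of the functors $h^{\#}$ and $(h^{(i)}_{\rm \et})^{\#}$ from Properties~\ref{propies2.3}, using that kernels, images and cokernels in ${\sf Strat}(X,\infty)$ are computed levelwise (as in the proof of Theorem~\ref{thm3.3}). The only cosmetic differences are that the paper packages the exactness argument via the functor ${\rm Ver}$ (a sequence is exact in ${\sf Strat}(X,\infty)$ iff it is exact after ${\rm Ver}$, and ${\rm Ver}\circ h^*$ lands in ordinary stratified bundles where exactness is clear), and deduces faithfulness from the general Tannakian fact that any fiber functor out of a Tannakian category is faithful, whereas you use the factorization $\omega_x\circ h^*=F_G$ directly; both routes are equally valid.
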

\begin{proof}
 As already recalled  in the Properties \ref{propies2.3} 1),  faithfulness follows from the remaining properties. On the other hand, $k$-linearity, and compatibility with the tensor structures are straightforward. Exactness is proven as using Ver as in Theorem \ref{thm3.3} 3). Indeed, ${\rm Ver}\circ h^*$ with values in ${\sf Strat}(X^{(1)})$ is obviously exact, while  a sequence in ${\sf Strat}(X,\infty)$ is exact if and only if it remains exact after applying ${\rm Ver}$. 
\end{proof}
If $(h_i: Y_i\to X, G_i, y_i)$ are objects in ${\sf N}(X,x)$ for $i=1,2$ and $(\psi: Y_1\to Y_2, \phi: G_1\to G_2, y_1\to y_2)$ is a morphism in ${\sf N}(X,x)$, then  Property \ref{propies2.3} 3) implies that $h_2^*=h_1^*\circ \phi^*$. 
On the other hand, the projective system of $\phi$ in ${\sf N}(X,x)$ induces an inductive system $\varinjlim_{{\sf N}(X,x), \phi^*} {\sf Rep}_k (G)$ which is a Tannaka category, with the forgetful functor  $F_G$ as the fiber functor.
The Tannaka $k$-group-scheme ${\rm Aut}^{\otimes}(F_G)$ is simply $\varprojlim_{ {\sf N}(X,x), \phi} G$, which is precisely Nori's fundamental group-scheme $\pi^N(X,x)$. 
As in addition the construction is obviously functorial in  $h$, we conclude:

\begin{thm}\label{thm4.3}
Let the notations be as in Construction \ref{const1}. The functor $h^*$ defined in \eqref{4.11} for one object $(h: Y\to X, G, y)$ of ${\sf N}(X,x)$ induces a functor of  Tannakian categories 
\ga{}{\frak{h}^*: \big( \varinjlim_{{\sf N}(X,x), \phi^*} {\sf Rep}_k (G), F_G \big) \to 
\big({\sf Strat}(X,\infty), \omega_x\big),\notag}
and the Tannaka-dual  homomorphism of $k$-group-schemes  
\ga{}{ \frak{h}^{* \vee}: \pi^{{\rm alg},\infty}(X,x)\to \pi^N(X,x)\notag} which is functorial in $X$.
\end{thm}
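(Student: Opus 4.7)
The plan is to assemble the individual functors $h^*$ of Construction \ref{const1} into a single tensor functor on the direct limit category, and then invoke Tannaka duality. By Lemma \ref{lem4.2} each $h^*$ is already $k$-linear, exact, tensor-compatible and faithful, and by \eqref{4.12} it satisfies $\omega_x\circ h^*=F_G$, so it is a morphism of neutral Tannakian categories in its own right. The two remaining tasks are: (i) to verify the transition compatibility $h_2^{*}=h_1^{*}\circ\phi^{*}$ for each morphism $(\psi\colon Y_1\to Y_2,\phi\colon G_1\to G_2)$ in ${\sf N}(X,x)$, so that the $h^*$ glue to a functor out of $\varinjlim {\sf Rep}_k(G)$; and (ii) to recognize the Tannaka dual of this limit category as Nori's $\pi^N(X,x)$.

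For (i), I would first fix $t$ large enough that the image of $G_i^{(-t)}\to G_i$ equals $G_{i,\et}$ for both $i=1,2$; this is possible since the $G_i$ are finite and $\phi$ carries étale quotient to étale quotient. Given $V\in {\sf Rep}_k(G_2)$, Construction \ref{const1} yields $h_2^*(V)$ and $h_1^*(\phi^*V)$ both as objects of ${\sf Strat}(X,t)$. In degrees $i\ge 1$ the components are $(h^{(i)}_{j,\et})^{\#}$ applied to $V$ (resp.\ $\phi^*V$), and Property \ref{propies2.3} 3) applied to the equivariant morphism $\psi_{\et}\colon Y_{1,\et}\to Y_{2,\et}$ supplies canonical identifications of bundles that intertwine the Frobenius descent data $\sigma^{(i)}$. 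For the gluing datum $\sigma^{(0)}$, both $\sigma^{(0)}_j$ are the tautological equality \eqref{4.10} produced by the diagram \eqref{4.8} associated with $h_j$; the functoriality of that diagram under $\psi$ forces the two identifications to coincide, giving $h_2^*=h_1^*\circ\phi^*$.

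Granting (i), the universal property of the inductive limit produces a unique $k$-linear exact tensor functor $\frak{h}^*\colon \varinjlim_{{\sf N}(X,x),\phi^*}{\sf Rep}_k(G)\to {\sf Strat}(X,\infty)$ with $\omega_x\circ\frak{h}^*=F_G$, which is thus a morphism of neutral Tannakian categories. For (ii), the limit category on the left is itself Tannakian with fiber functor $F_G$, and its Tannaka group-scheme is $\varprojlim G=\pi^N(X,x)$, as already recalled in the paragraph preceding the theorem statement. Tannaka duality then yields the desired homomorphism $\frak{h}^{*\vee}\colon \pi^{{\rm alg},\infty}(X,x)\to \pi^N(X,x)$. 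Functoriality in $X$ follows from Property \ref{propies2.3} 4): pullback along a morphism $f\colon X'\to X$ with $f(x')=x$ intertwines $h\mapsto h^{\#}$, and the construction of $\sigma^{(0)}$ via \eqref{4.8} is visibly base-change functorial.

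The crux will be (i): one must check that the tautological gluing data $\sigma^{(0)}_j$ for the two torsors are compatibly identified by the equivariant morphism $\psi$, which requires tracing carefully through the commutative diagram \eqref{4.8} on both sides and invoking the étale descent identifications uniformly at the chosen level $t$. Everything else is formal.
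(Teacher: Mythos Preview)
Your proposal is correct and follows essentially the same approach as the paper: the paper's argument (contained in the paragraph immediately preceding the theorem) simply invokes Property~\ref{propies2.3}~3) to obtain $h_2^{*}=h_1^{*}\circ\phi^{*}$, identifies the Tannaka group of the limit with $\pi^N(X,x)$, and declares the construction functorial. You have unpacked exactly these steps, supplying the extra care about choosing a common $t$, the descent of $\psi$ to $\psi_{\et}$, and the compatibility of the $\sigma^{(0)}$ data via diagram~\eqref{4.8}, all of which the paper leaves implicit.
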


The aim of the rest of the section is to show that the homomorphim 
 $\frak{h}^{* \vee}$ is faithfully flat and induces the profinite quotient homomorphism.

\begin{prop} \label{prop4.4} Let $(Y\stackrel{h}{\to} X,G,y)$ be an object of ${\sf N}(X,x)$. The following conditions are equivalent.
\begin{itemize}
\item[1)] The induced map $\pi^{{\rm alg},\infty}(X,x)\to G$  (see \eqref{4.11}) is an epimorphism.
\item[2)] The induced map $\pi^N(X,x)\to G$ is an epimorphism. 
\item[3)]  The functor $h^*$ in \eqref{4.11} is fully faithful and its image is closed under taking subquotients in ${\sf Strat}(X, \infty)$.
\end{itemize}
\end{prop}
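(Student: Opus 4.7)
The main input will be the Tannakian dictionary \cite[Proposition~2.21]{DM}: a homomorphism of affine $k$-group-schemes is an epimorphism (= faithfully flat) if and only if the dual tensor functor on finite-dimensional representations is fully faithful with image closed under subquotients. I will apply this to $h^{*}\colon {\sf Rep}_{k}(G)\to {\sf Strat}(X,\infty)$, which satisfies $\omega_{x}\circ h^{*}=F_{G}$ by~\eqref{4.1}, to obtain (1)~$\Leftrightarrow$~(3) directly. For (1)~$\Rightarrow$~(2) I will use the factorization of the map in (1) through $\pi^{N}(X,x)$ provided by Theorem~\ref{thm4.3}: if the composite is an epimorphism, so is the second factor.

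The substantive direction is (2)~$\Rightarrow$~(1), which I plan to prove by contraposition. Assuming $H:=\im(\pi^{{\rm alg},\infty}(X,x)\to G)\subsetneq G$, I will factor the map as $\pi^{{\rm alg},\infty}(X,x)\twoheadrightarrow H\stackrel{\phi}{\hookrightarrow}G$. Tannakian duality will then provide a fully faithful tensor functor $\tilde h\colon {\sf Rep}_{k}(H)\to {\sf Strat}(X,\infty)$ with $h^{*}=\tilde h\circ \phi^{*}$ and $\omega_{x}\circ \tilde h = F_{H}$. Next I will introduce the zeroth-component functor $E^{(0)}\colon {\sf Strat}(X,\infty)\to {\sf Coh}(X)$, which is $k$-linear, exact (by the flat-descent arguments in the proof of Theorem~\ref{thm3.3}), tensor-compatible, and satisfies $i_{x}^{*}\circ E^{(0)}=\omega_{x}$. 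The composite $F:=E^{(0)}\circ \tilde h\colon {\sf Rep}_{k}(H)\to {\sf Coh}(X)$ will then be a fiber functor with $i_{x}^{*}\circ F = F_{H}$, so by Theorem~\ref{thm2.4} it arises from a unique object $(h_{H}\colon Y_{H}\to X, H, y_{H})\in {\sf N}(X,x)$ with $h_{H}^{\#}=F$.

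To close the argument I will chain the identities $h^{\#} = E^{(0)}\circ h^{*} = E^{(0)}\circ \tilde h\circ \phi^{*} = h_{H}^{\#}\circ \phi^{*}$, and then use Property~\ref{propies2.3}(3) to recognize the right-hand side as the $\#$-functor of the $G$-torsor $Y_{H}\times^{H}G\to X$ induced from $Y_{H}$ along $\phi$. The uniqueness in Theorem~\ref{thm2.4} will identify $(Y, G, y)\cong (Y_{H}\times^{H}G, G, (y_{H},1))$ in ${\sf N}(X,x)$, and the canonical inclusion $(Y_{H}, H, y_{H})\to (Y_{H}\times^{H}G, G, (y_{H},1))$ will then be a morphism in ${\sf N}(X,x)$ with group-scheme component $\phi$. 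Hence $\pi^{N}(X,x)\to G$ will factor through the proper closed subgroup-scheme $H$ and cannot be an epimorphism, contradicting~(2).

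The hard part will be constructing the reduction $Y_{H}$: I must confirm that the zeroth-component functor $E^{(0)}$ is genuinely a fiber functor on ${\sf Strat}(X,\infty)$ valued in ${\sf Coh}(X)$, and then use the uniqueness clause of Nori's reconstruction (Theorem~\ref{thm2.4}) to recognize the reconstructed torsor as the $H$-reduction of $Y$ along $\phi$.
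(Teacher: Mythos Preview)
Your proposal is correct and follows essentially the same route as the paper. Both arguments hinge on the same key construction: using the zeroth-component fiber functor $E^{(0)}=\omega_X\colon {\sf Strat}(X,\infty)\to {\sf Coh}(X)$ together with Nori's reconstruction (Theorem~\ref{thm2.4}) to produce an $H$-torsor $Y_H\to X$ for $H$ the image of $\pi^{{\rm alg},\infty}(X,x)\to G$, and then deducing a factorization $\pi^N(X,x)\to H\to G$ from a morphism $(Y_H,H,y_H)\to (Y,G,y)$ in ${\sf N}(X,x)$. The only difference is organizational: the paper proves $(2)\Rightarrow(3)$ directly (defining $H$ as the Tannaka group of the subquotient-closure $\sC$ of the image of $h^*$, then using the factorization plus assumption~(2) to force $H\cong G$), whereas you run the contrapositive $\neg(1)\Rightarrow\neg(2)$ and are somewhat more explicit about identifying $Y\cong Y_H\times^H G$ via the uniqueness clause of Theorem~\ref{thm2.4}.
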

\begin{proof}
The equivalence $(1) \Leftrightarrow (3)$ follows from \cite[Proposition~2.21]{DM}. Moreover, since by construction, the map $\pi^{{\rm alg},\infty}(X,x)\to G$ factors through $\pi^N(X,x)$, $(1)\Rightarrow (2)$ is obvious. 

We show $(2)\Rightarrow (3)$. 
Let $\sC$ denote the full  subcategory of ${\sf Strat}(X,\infty)$ generated by subquotients in ${\sf Strat}(X,\infty)$ of objects which are in the image of $h^*:{\sf Rep}_k(G)\to {\sf Strat}(X,\infty)$. The property 3) is equivalent to saying that $h^*: {\sf Rep}_k(G)\to \sC$ is an equivalenece of categories. By standard Tannaka formalism, $\sC$  itself is a $k$-linear, abelian, rigid tensor subcategory  of 
${\sf Strat}(X,\infty)$, thus $(\sC, \rho_x)$ is a Tannaka subcategory of 
$({\sf Strat}(X,\infty), \omega_x)$, where $\rho_x=\omega_x|_{\sC}$.

We show now  that $h^*: {\sf Rep}_k (G)\to \sC$ is an equivalence of categories. 
 Let $H={\rm Aut}(\rho_x)$ be the Tannaka $k$-group-scheme of $(\sC,\rho_x)$. We claim that the induced homomorphism $H\to G$ is a closed immersion.
This is equivalent (\cite[Proposition~2.21]{DM}) to saying that every object of $\sC$ is a subquotient in 
$\sC$ of an object  in $h^*({\sf Rep}_k(G))$, which is true since by definition of $\sC$, a subquotient in $\sC$ of objects in $h^*({\sf Rep}_k(G))$ is the same as a subquotient in ${\sf Strat}(X,\infty)$ of objects in $h^*({\sf Rep}_k(G))$. 
We conclude in particular that $H$ is a finite group scheme.   

The fiber functor (in the sense of Deligne \cite[1.9]{D}, see Properties \ref{propies2.3} 1))  $\omega_X: {\sf Strat}(X,\infty)\to {\sf Coh}(X)$ defined by 
$ (E_i,\sigma_i, i\in \N) \mapsto E_0$
restricts to the fiber functor $\rho_X: \sC \to {\sf Coh}(X)$. One has a 
commutative diagram of functors
\ga{4.13}{ \xymatrix{{\sf Rep}_k(G)\ar[r]^{h^*} \ar[dr]_{h^{\#}} & 
\sC \ar[d]^{\rho_X}\\
 & {\sf Coh}(X)
} }
and, upon applying $i_x$, \eqref{4.1} implies that $i_x\circ h^{\#}=F_G$. 
By applying Theorem \ref{thm2.4}, we obtain a morphism 
\ga{4.14}{(h_H:Y_H\to X,H,y_H) \to (h:Y\to X,G,y)}
in ${\sf N}(X,x)$.
This in turn induces a factorization of $\pi^N(X,x)\to G$ as 
\ga{4.15}{\xymatrix{
\pi^N(X,x)\ar[r]\ar[d] & G\\
H\ar[ru] & 
}}
But $\pi^N(X,x)\to G$ is assumed to be an epimorphism. Thus $H\to G$ must be an epimorphism. Since it is also a closed immersion, we conclude
\ga{4.16}{H\xrightarrow{\cong} G.}
In other words 
\ga{4.17}{h^*: {\sf Rep}_k (G)\xrightarrow{\cong} \sC.}
This finishes the proof.
\end{proof}
 Recall that $k$ is perfect.
\begin{lem}\label{et} Let $G$ be a finite $k$-group-scheme, let
 $h:Y\to X$ be a $G$-torsor.
 Then the following conditions are equivalent
\begin{itemize}
 \item[(i)] $h$ admits a reduction (necessarily unique) of structure group to $G_{\rm red}=G_{\rm \et}\subset G$. 
 \item[(ii)] For every natural number $n$, there is a $G$-torsor $h_n: Y_n\to X^{(n)}$ which pulls back via  $X\xrightarrow{F^{(0,n)}} X^{(n)}$ to $h$. 
\end{itemize}
\end{lem}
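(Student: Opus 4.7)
The plan is to prove each implication separately, using the semidirect-product decomposition $G = G_0\rtimes G_{{\rm \et}}$ (available since $k$ is perfect) and the behaviour of Frobenius on torsors.

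For $(i)\Rightarrow(ii)$, I will use that $F^{(0,n)}:X\to X^{(n)}$, being finite, flat and radicial, is a universal homeomorphism; by the topological invariance of the \'etale site (SGA~1, Expos\'e~IX), pullback along it is an equivalence on categories of \'etale covers, and hence on $G_{{\rm \et}}$-torsors. The \'etale $G_{{\rm \et}}$-torsor furnishing the reduction of $h$ therefore descends uniquely to a $G_{{\rm \et}}$-torsor $(Y_{{\rm \et}})_n\to X^{(n)}$, and $Y_n:=(Y_{{\rm \et}})_n\times^{G_{{\rm \et}}}G$ is the required $G$-torsor on $X^{(n)}$ whose $F^{(0,n)}$-pullback is $Y$.

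For $(ii)\Rightarrow(i)$, I will choose $n$ large enough that the $n$-th relative Frobenius $F^n_{G/k}:G\to G^{(n)}$ annihilates $G_0$ --- this is possible because $G_0$ is infinitesimal of finite height, and then the image of $F^n_{G/k}$ is precisely $G_{{\rm \et}}^{(n)}$. Taking $h_n:Y_n\to X^{(n)}$ from (ii), let $(Y_n)_{{\rm \et}}:=Y_n/G_0$ be its \'etale quotient, a $G_{{\rm \et}}$-torsor on $X^{(n)}$ whose pullback by $F^{(0,n)}$ is $Y_{{\rm \et}}:=Y/G_0$. Form the induced $G$-torsor $Z_n:=(Y_n)_{{\rm \et}}\times^{G_{{\rm \et}}}G$ on $X^{(n)}$, which by construction carries a canonical reduction to $G_{{\rm \et}}$. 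The aim is to show $F^{(0,n)*}Z_n\cong Y$; pulling back the canonical reduction of $Z_n$ along $F^{(0,n)}$ will then supply the required reduction of $Y$.

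Both $Y$ and $F^{(0,n)*}Z_n$ are $G$-torsors on $X$ with the same \'etale quotient $Y_{{\rm \et}}$, so they are determined by their associated $G_0$-torsors over $Y_{{\rm \et}}$ --- trivial for $F^{(0,n)*}Z_n$ by construction. The task reduces to showing that the $G_0$-torsor $Y\to Y_{{\rm \et}}$, which is the pullback of the $G_0$-torsor $Y_n\to (Y_n)_{{\rm \et}}$ along the universal homeomorphism $Y_{{\rm \et}}\to (Y_n)_{{\rm \et}}$ (obtained as the base change of $F^{(0,n)}:X\to X^{(n)}$), is trivial. The key sub-claim is: for any infinitesimal $k$-group-scheme $H$ killed by $F^n$ and any $H$-torsor $P\to T$, the pullback of $P$ by the $n$-th relative Frobenius of the base is the trivial $H$-torsor. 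The idea is that the $n$-fold relative Frobenius $F^n_{P/k}:P\to P^{(n)}$ is equivariant for $F^n_{H/k}:H\to H^{(n)}$, which by hypothesis is the constant morphism to the identity; hence $F^n_{P/k}$ is $H$-invariant, factors through $T=P/H$, and --- after using perfectness of $k$ to identify the relative Frobenius twist canonically with the original base --- yields a section trivializing the Frobenius pullback of $P$. Applied to the $G_0$-torsor $Y_n\to (Y_n)_{{\rm \et}}$, this gives triviality of $Y\to Y_{{\rm \et}}$ and finishes the argument.

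The hard part will be carrying out the sub-claim rigorously: the Frobenius-invariance idea is transparent, but one must carefully compare relative and absolute Frobenius and keep track of the $F_k^n$-twists to ensure the trivializing section lives on the intended base. Perfectness of $k$ enters both in the decomposition $G = G_0\rtimes G_{{\rm \et}}$ (as noted in the discussion after Construction~\ref{const1}) and in the canonical identification of Frobenius twists used to trivialize the pullback.
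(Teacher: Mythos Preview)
Your overall strategy coincides with the paper's. For $(i)\Rightarrow(ii)$ the paper simply notes that since $Y_{\rm \et}\to X$ is \'etale, the relative-Frobenius square
\[
\xymatrix{Y_{\rm \et}\ar[r]\ar[d] & (Y_{\rm \et})^{(n)}\ar[d]\\ X\ar[r]^{F^{(0,n)}} & X^{(n)}}
\]
is cartesian, and sets $Y_n=(Y_{\rm \et})^{(n)}\times^{G_{\rm \et}}G$; your appeal to the topological invariance of the \'etale site is a repackaging of the same fact.

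For $(ii)\Rightarrow(i)$ there is a genuine gap in your reduction step. Two $G$-torsors over $X$ sharing the \'etale quotient $Y_{\rm \et}$ are \emph{not} determined by their underlying $G_0$-torsors over $Y_{\rm \et}$: one needs the $G_{\rm \et}$-equivariant structure as well (equivalently, such $G$-torsors correspond to torsors under the inner form ${}^{Y_{\rm \et}}G_0$ over $X$, not to plain $G_0$-torsors over $Y_{\rm \et}$). Thus showing that $Y\to Y_{\rm \et}$ is a trivial $G_0$-torsor---which is all your sub-claim provides---does not by itself give a reduction of $Y$ to $G_{\rm \et}$; what is needed is a $G_{\rm \et}$-\emph{equivariant} section.

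The good news is that your construction actually produces one. The relative Frobenius $Y_n\to Y_n^{(n)}$ is equivariant for $F^{(-n,0)}\colon G^{(-n)}\to G$ (after untwisting), which is trivial on $G_0^{(-n)}$ but an isomorphism on $G_{\rm \et}^{(-n)}$; hence the factorisation through $(Y_n)_{\rm \et}$ is $G_{\rm \et}$-equivariant, and after the $(-n)$-twist one obtains a $G_{\rm \et}$-equivariant map $Y_{\rm \et}\cong(Y_n^{(-n)})_{\rm \et}\to Y_n\times_{X^{(n)}}X=Y$. That map \emph{is} the reduction, directly---so the detour through $Z_n$ and the comparison of $G_0$-torsors is unnecessary. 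Once you record the equivariance, your argument becomes exactly the paper's: it builds $(Y_n^{(-n)})_{\rm \et}\to Y$ via precisely this Frobenius factorisation and concludes immediately.
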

\begin{proof}
We show (i) $\Rightarrow$ (ii).  Let $h_{\rm \et}: Y_{\rm \et}\to X$ be a $G_{\rm \et}$-torsor which is a  reduction of structure of $h$ for the closed embedding $G_{\rm \et}\subset G$. Thus $Y=Y_{\rm \et}\times_{G_{\rm \et}} G$. The isomomorphism \eqref{4.3} induces a cartesian diagram
\ga{4.18}{\xymatrix{Y_{\rm \et}\ar[d]_{h_{\rm \et}} \ar[r]^{F^{(0,n)}} \ar@{}[rd]|\Box& (Y_{\rm \et})^{(n)} \ar[d]^{(h_{\rm \et})^{(n)}}\\
X\ar[r]_{F^{(0,n)}} & X^{(n)}
}
}
We set $Y_n=  (Y_{\rm \et})^{(n)}\times_{ {G_{\rm \et}}} G, \  h_n=(h_{\rm \et})^{(n)}\times_{G_{\rm \et}} G$. 

We show (ii) $\Rightarrow$ (i). For a large enough positive integer $n$, we consider the  commutative diagram similar to \eqref{4.8}:
\ga{4.19}{
\xymatrix{
Y_n^{(-n)}\ar[dd]\ar[rr]^{\gamma}\ar[rd] &  & Y_n \ar[dd]_{h_n} \\
	& \left(Y_n^{(-n)}\right)_{\rm \et} \ar@{--}[ru]^{\exists \ ! }\ar[ld] & \\
X \ar[rr] &    &  X^{(n)}
}}
We explain the terms in the diagram: with Notations \ref{notas3.1}, one has 
 $ Y_n^{(-n)} =Y_n\otimes_{F_k^{-n}}k$, thus $h_n$ induces $h_n\otimes_{F_k^{-n}}k: 
Y_n^{(-n)} \to \big( X^{(n)}\big)^{(-n)}=X$, 
which is a principal $G^{(-n)}$ bundle.
The top horizontal map $\gamma$  is equivariant with respect to  $G^{(-n)}\xrightarrow{F^{(-n,0)}} G$. Since $n$ is large, the image of $G^{(-n)}\to G$ is precisely $G_{\rm \et} \subset G$. Therefore, $\gamma$  factors uniquely through 
$\left(Y_n^{(-n)}\right)_{\rm \et}$. 
Via the identification $G_{\rm \et}^{(-n)}\xrightarrow{F^{(-n,0)}} G_{\rm \et} $, the morphism $\left(Y_n^{(-n)}\right)_{\rm \et} \to X^{(n)}$ is a $G_{\rm \et}$-torsor. The above commutative diagram shows the existence of an equivariant map  $\left(Y_n^{(-n)}\right)_{\rm \et} \to Y_n\times_{X^{(n)}}X$.  We conclude that the $G$-torsor  $ Y_n \times_{X^{(n)}}X \to X$ has a reduction of structure group to $G_{\rm \et}$. 
 
\end{proof}

\begin{thm} \label{thm4.6} Let the notations are as in \ref{notas3.1} and let $x\in X(k)$ be a rational point. Then the homomorphism
$\frak{h}^{*\vee}: \pi^{{\rm alg},\infty}(X,x)\to \pi^N(X,x)$ is the profinite quotient homomomorphism. 
\end{thm}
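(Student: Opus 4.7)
The assertion has two components: faithful flatness of $\frak{h}^{*\vee}$, and the claim that $\pi^N(X,x)$ is the \emph{maximal} profinite quotient of $\pi^{{\rm alg},\infty}(X,x)$, equivalently that every object $E\in {\sf Strat}(X,\infty)$ whose Tannaka monodromy group $G$ is finite lies in the essential image of $\frak{h}^*$.

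For faithful flatness, I would introduce the \emph{effective} objects of ${\sf N}(X,x)$, namely those $(h:Y\to X,G,y)$ for which $\pi^N(X,x)\to G$ is an epimorphism. Any $(Y\to X,G,y)$ is dominated by such an effective object: replace $G$ by the scheme-theoretic image of $\pi^N(X,x)\to G$ and $Y$ by the corresponding reduction of structure group. Effective objects are thus cofinal in the projective system defining $\pi^N(X,x)$. For an effective object, Proposition \ref{prop4.4} (2)$\Rightarrow$(1) gives the surjectivity $\pi^{{\rm alg},\infty}(X,x)\to G$, and passing to the projective limit yields faithful flatness of $\frak{h}^{*\vee}$.

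For the profinite-quotient property, let $E\in {\sf Strat}(X,\infty)$ generate a Tannakian subcategory $\sC\subset {\sf Strat}(X,\infty)$ equivalent to ${\sf Rep}_k(G)$ for some finite $k$-group-scheme $G$, via a tensor equivalence $\phi:{\sf Rep}_k(G)\xrightarrow{\cong}\sC$ compatible with $F_G$ and $\omega_x$. The composite $\omega_X\circ \phi$, where $\omega_X:{\sf Strat}(X,\infty)\to {\sf Coh}(X)$ sends $(E^{(i)},\sigma^{(i)})$ to $E^{(0)}$, is a fiber functor with $i_x^*\circ(\omega_X\circ\phi)=F_G$. Theorem \ref{thm2.4} produces a unique $(h:Y\to X,G,y)\in {\sf N}(X,x)$ with $\omega_X\circ\phi=h^\#$, and Construction \ref{const1} then gives a tensor functor $h^*:{\sf Rep}_k(G)\to{\sf Strat}(X,\infty)$. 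By construction of $h$, the level-$0$ pieces of $\phi$ and $h^*$ agree. The decisive ingredient at higher levels is dos Santos' theorem \cite[Corollary~12]{dS}, applied on $X^{(1)}$: $\pi^{{\rm alg}}(X^{(1)},x^{(1)})$ is pro-smooth, so every finite quotient of it is smooth, hence, $k$ being perfect, \'etale. The composite ${\rm Ver}\circ\phi:{\sf Rep}_k(G)\to {\sf Strat}(X^{(1)})$ therefore generates a Tannakian subcategory whose group is a finite \'etale quotient of $G$, necessarily a quotient of $G_{\rm \et}$. Applying Theorem \ref{thm2.4} on $X^{(1)}$ yields an \'etale torsor $\tilde h_1$ realizing the level-$1$ piece of $\phi$, while Construction \ref{const1} yields $h^{(1)}_{\rm \et}$. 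Comparing both pulled back to $X^{(-t)}$ through the stratification $\sigma^{(0)}_\phi$ on the one hand and the tautological identification \eqref{4.10} on the other, the two $G$-torsors $\tilde h_1\times^{G_{\rm \et}}G$ and $h^{(1)}_{\rm \et}\times^{G_{\rm \et}}G$ on $X^{(1)}$ agree after $F^{(-t,1)*}$; since $F^{(-t,1)}$ is a composition of relative Frobenii, hence a universal homeomorphism, it induces an equivalence of \'etale sites, forcing $\tilde h_1\cong h^{(1)}_{\rm \et}$ already on $X^{(1)}$. Frobenius compatibility then propagates the agreement to all $i\geq 1$.

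I expect the principal obstacle to lie in promoting these level-wise agreements of underlying torsors to a genuine isomorphism of tensor functors $\phi\cong h^*$ into ${\sf Strat}(X,\infty)$: one must verify that the stratification $\sigma^{(0)}$ carried by $\phi(V)$ coincides, naturally in $V$, with the tautological $\sigma^{(0)}$ produced by Construction \ref{const1}. This entails tracking the uniqueness clauses of Theorem \ref{thm2.4} at levels $0$, $1$ and $-t$ simultaneously, together with descent along the faithfully flat $F^{(-t,0)}:X^{(-t)}\to X$ to match the identifications up to natural isomorphism of fiber functors ${\sf Rep}_k(G)\to{\sf Coh}(X^{(-t)})$.
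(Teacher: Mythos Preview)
Your surjectivity argument via effective objects and Proposition \ref{prop4.4} matches the paper's.

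For the profinite-quotient step your route diverges from the paper's at the key point of establishing that, for levels $n\geq 1$, the relevant torsor reduces to $G_{\rm \et}$. You invoke dos Santos' pro-smoothness of $\pi^{\rm alg}(X^{(1)},x^{(1)})$ to force any finite quotient arising via ${\rm Ver}\circ\phi$ to be \'etale. The paper instead proves and uses an elementary self-contained lemma (Lemma \ref{et}): a $G$-torsor on $X$ that descends along arbitrarily high Frobenius twists admits a reduction of structure group to $G_{\rm \et}$. Your approach imports a genuinely nontrivial external theorem; the paper's stays internal and direct.

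More importantly, the paper organizes the matching of $\phi$ with $h_0^*$ in a way that dissolves the obstacle you flag at the end. Rather than constructing only $h=h_0$ at level $0$ and then separately identifying the higher-level torsors, the paper applies Theorem \ref{thm2.4} to the fiber functor $P_n\circ\delta:{\sf Rep}_k(G)\to{\sf Coh}(X^{(n)})$, $V\mapsto \delta(V)^{(n)}$, at \emph{every} level $n\geq 0$. This produces $G$-torsors $h_n:Y_n\to X^{(n)}$ with $h_n^{\#}=P_n\circ\delta$ built in by construction, so $\delta(V)^{(n)}=h_n^{\#}(V)$ holds tautologically. The stratification relations $P_n=F^{(n)*}\circ P_{n+1}$ for $n\geq 1$ together with the uniqueness clause of Theorem \ref{thm2.4} force $h_n\cong F^{(n)*}h_{n+1}$; this is exactly the hypothesis of Lemma \ref{et}, yielding the reduction $h_{n,{\rm \et}}$. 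Then Properties \ref{propies2.3}(3) gives $h_n^{\#}(V)=h_{n,{\rm \et}}^{\#}(V)$ for $n\geq 1$, and the level-by-level identification of $\delta(V)$ with $h_0^*(V)$ follows without any separate torsor-matching argument, any \'etale-site descent along $F^{(-t,1)}$, or any delicate tracking of $\sigma^{(0)}$. The moral: building $h_n$ from $P_n$ directly, rather than from $h_0$ by Frobenius twist, makes the uniqueness in Theorem \ref{thm2.4} do all the identification work for you.
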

\begin{proof}
We have already shown in Proposition \ref{prop4.4} that the homomorphism $\frak{h}^{*\vee}$  is surjective. In order to show that $  \frak{h}^{*\vee} $ is the profinite completion homomorphism, we need to show that any epimorphism $$\phi:\pi^{{\rm alg}, \infty}(X,x)\to G,$$ where $G$ is a $k$-finite group-scheme, factors through $\pi^N(X,x)$. This is equivalent to showing that given any finite Tannaka subcategory $\sT\subset {\sf Strat}(X,\infty)$, i.e. with $G={\rm Aut}^{\otimes}(\sT, \rho_x)$ finite, where $\rho_x=\omega_x|_{\sT}$,  there exists an object $(h:Y\to X,G,y)$ in ${\sf N}(X,x)$ such that $\sT$ is the image of the functor $h^*$ constructed in \eqref{4.11}.
 We do this in two steps. \\[.1cm]
\noindent \underline{Step(1)}: For each $n\geq 0$, we consider  the fiber functor  
\ga{4.20}{\omega_{X^{(n)}}: {\sf Strat}(X,\infty)\to {\sf Coh}(X^{(n)}), \ \  (E^{(i)}, \sigma^{(i)}, i\in \N)\mapsto E^{(n)} .      } 
 It restricts to a  fiber functor
 \ga{}{P_n:\sT\to {\sf Coh}(X^{(n)}). \notag} 
Let $\delta:{\sf Rep}_k(G) \to \sT$ be the equivalence given by Tannaka categories defined by the inverse functor to the equivalence induced by $\rho_x$. Consider  $$P_n\circ \delta :{\sf Rep}_k(G)\to {\sf Coh}(X^{(n)}).$$ By Theorem \ref{thm2.4}, we obtain $G$-torsors $(h_n:Y_n\to X^{(n)})$ for every $n$, such that 
\ga{4.21}{h_n^{\#}=P_n\circ \delta  .}
Since the $G$-torsors thus obtained are unique upto isomorphism, the  equality
\ga{}{ P_n = F^{(n)*}\circ P_{n+1},\ \forall \ n \geq 1  \notag} 
implies  that the torsor $h_{n+1}$ pulls back to $h_n$. Thus by Lemma \ref{et}, each $Y_n$ admits a reduction of structure group to $G_{\rm \et}\subset G$ for all $n\geq 1$.\\[.1cm]
\noindent \underline{Step(2)}:  Composing $\delta$ with the inclusion $\sT \inj {\sf Strat}(X,\infty)$ we  obtain  a functor from ${\sf Rep}_k(G) \to {\sf Strat}(X,\infty)$. We also have the  functor $h_0^*:{\sf Rep}_k(G)\to {\sf Strat}(X,\infty)$  (see \eqref{4.11}) defined by the $G$-torsor $h_0:Y_0\to X$. In order to finish the proof we have to show that these two functors coincide. This is equivalent to saying that the following diagram of functors commutes.
\ga{4.22}{\xymatrix{
{\sf Rep}_k(G) \ar[r]^{\delta}\ar[rd]_{h_0^*} & \sT \ar[d]^{{\rm incl.}} \\
& {\sf Strat}(X,\infty)
}.}
Let $V$ be an object of ${\sf Rep}_k(G)$. We will show that there is an isomorphism between  $i(V)$ and $h_0^*(V)$, which is functorial in $V$. This will finish the proof. Let $\delta(V)=(\delta(V)^{(n)},\sigma^{(n)},n\in \N)$ and $h_0^*(V)=(E^{(n)},\tau^{(n)},n\in \N)$. 

We let  $h_{n,{\rm \et}}:Y_{n,{\rm \et}}\to X^{(n)}$ be the $G_{{\rm \et}}$-torsor induced by $h_n$ for $n\ge 1$. Note that by construction  \ref{const1} of the functor $h_0^*$,  one has 
\ga{4.23}{E^{(n)}=h_{n,{\rm \et}}^{\#}(V)\ \forall \ n\geq 1 \ \ \ \text{and} \ \  E^{(0)}=h_0^{\#}(V).} 
On the other hand, by definition of the functors $P_n$, $$P_n(i(V))=i(V)^{(n)}$$
Thus by \eqref{4.21}, one has
 \ga{4.24}{i(V)^{(n)}=h_n^{\#}(V) \ \forall \ n\geq 0.} 
But as explained before, for every $n\geq 1$, $h_n:Y_n\to X^{(n)}$ admits a reduction of structure group to  $G_{{\rm \et}}$. Thus by Proposition \ref{propies2.3}(3),
\ga{4.25}{h_n^{\#}(V)= h_{n,{\rm \et}}^{\#}(V) \ \ \forall\  n\geq 1.}
Thus we conclude 
\ga{4.26}{i(V)=h_0^*(V).}

\end{proof}
If $\sT$ is any $k$-linear, abelian, rigid tensor category, together with a neutral fiber functor $\omega: \sT \to {\sf Vec}_k$, we denote by $\sT^{{\rm fin}}$ the full subcategory spanned by objects $E$ which have the property that the full tensor subcategory $\langle E\rangle\subset \sT$ spanned by $E$ and its dual $E^\vee$ has a finite  Tannaka group scheme ${\rm Aut}^{\otimes}(
\langle E \rangle, \omega|_{\langle E \rangle})$. So by construction, Theorem \ref{thm4.6} has the following consequence:

\begin{cor}\label{cor4.7}
 With the notations as in Theorem \ref{thm4.6}, the full embedding $$
{\sf Strat}(X,x)^{{\rm fin}}\subset {\sf Strat}(X,x)$$ induces via the fiber functor $\omega_x$ the quotient homomorphism $$\pi^{{\rm alg}, \infty}(X,)\to \pi^N(X,x).$$ 
\end{cor}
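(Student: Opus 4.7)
The plan is to read Corollary \ref{cor4.7} as a direct translation of Theorem \ref{thm4.6} via the general Tannakian principle that identifies the full subcategory of finite objects with the pro-finite quotient of the Tannaka group scheme. So the proof will proceed by first establishing this general principle and then specializing to our situation.

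For the general principle, let $(\sT,\omega)$ be any neutralized $k$-linear rigid abelian tensor category with affine group scheme $G={\rm Aut}^{\otimes}(\omega)$. I would first check that $\sT^{\rm fin}$ is stable under subquotients, direct sums, tensor products and duals: any such operation applied to finitely many objects of $\sT^{\rm fin}$ keeps one inside the Tannaka subcategory they generate, which is itself generated by finitely many finite objects and hence has finite Tannaka group scheme. Thus $(\sT^{\rm fin},\omega|_{\sT^{\rm fin}})$ is a neutralized Tannakian category, and by \cite[Proposition~2.21]{DM} the inclusion is Tannaka-dual to a faithfully flat homomorphism $G\twoheadrightarrow H$ with $H={\rm Aut}^\otimes(\omega|_{\sT^{\rm fin}})$. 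To identify $H$, note that by \cite[Proposition~2.21]{DM} again, the finite Tannakian subcategories $\sC\subset \sT$ correspond bijectively to the finite quotients $G\twoheadrightarrow G_\sC$ via an equivalence $\sC\simeq {\sf Rep}_k(G_\sC)$, and $\sT^{\rm fin}$ is the $2$-colimit of all such $\sC$. Taking Tannaka duals then yields $H=\varprojlim_\sC G_\sC$, which is by definition the maximal pro-finite quotient of $G$.

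Applying this general principle to $(\sT,\omega)=({\sf Strat}(X,\infty),\omega_x)$ with $G=\pi^{{\rm alg},\infty}(X,x)$, the full embedding ${\sf Strat}(X,\infty)^{\rm fin}\subset {\sf Strat}(X,\infty)$ corresponds via $\omega_x$ to the pro-finite completion of $\pi^{{\rm alg},\infty}(X,x)$, which by Theorem \ref{thm4.6} is precisely $\frak{h}^{*\vee}:\pi^{{\rm alg},\infty}(X,x)\to \pi^N(X,x)$. The whole argument is a formal consequence of Theorem \ref{thm4.6} and standard Tannaka duality, so no genuine obstacle arises; the only mildly delicate step is the verification that $\sT^{\rm fin}$ is closed under the rigid tensor operations so as to form a Tannakian subcategory, which is elementary.
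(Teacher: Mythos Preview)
Your proposal is correct and follows essentially the same approach as the paper. The paper gives no separate proof for Corollary \ref{cor4.7}; it simply states that ``by construction, Theorem \ref{thm4.6} has the following consequence,'' and you have merely spelled out the standard Tannakian principle (that $\sT^{\rm fin}$ corresponds to the pro-finite quotient of the Tannaka group) which the paper leaves implicit.
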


\end{document}